\newcommand{\blue}{\color{blue}}
\newcommand{\black}{\color{black}}
\newcommand{\rmd}{{\mathrm d}}
\newcommand{\Var}{{\mathrm{Var}}}
\newcommand{\Cov}{{\mathrm{Cov}}}
\newcommand{\calG}{{\mathcal G}}
\newcounter{figurecounter}
\title{The Design of Optimal Dependency and Rewards 
\thanks{Solan acknowledges the support of the ISF grant No. 211/22. Chang acknowledges the support of the NSFC grant No. 72403040.}}
\author
{Eilon Solan\footnote{School of Mathematical Sciences, Tel-Aviv University, Tel-Aviv, Israel, 6997800, E-mail: eilons@tauex.tau.ac.il.}\and 
Avraham Tabbach\footnote{Faculty of Law, Tel-Aviv University, Tel-Aviv, Israel, 6997800, E-mail: adtabbac@tauex.tau.ac.il.}\and 
Chang Zhao\footnote{Department of Economics, University of International Business and Economics, China, E-mail: zhaochangtd@hotmail.com.}}
\date{\today}
\begin{document}


\begin{titlepage}
\maketitle
\thispagestyle{empty}

\begin{abstract}
  \noindent

We analyze a two-period principal-agent model in which the principal faces a budget constraint, and the agent's private costs of performing tasks across the two periods may be correlated. We examine the optimal design of the reward scheme and the cost correlation structure. Our findings reveal that when the budget is low, the optimal reward scheme employs \textit{sufficient performance targeting}, rewarding the agent's first performance. Conversely, when the principal's budget is high, the focus shifts to \textit{sustained performance targeting}, compensating the agent's second performance. Introducing a negative cost correlation proves particularly beneficial in both scenarios: it increases the likelihood of the agent performing at least once under low budgets and balances the agent's total costs to facilitate consistent performance under high budgets. However, the optimal cost correlation structure can be more elaborate, especially for intermediate budget levels. Our results offer valuable insights for real-world applications, such as research funding allocation.

\end{abstract}

\end{titlepage}

\onehalfspacing

\newcommand{\RNum}[1]{\lowercase\expandafter{\romannumeral #1\relax}}
\newtheorem{proposition}{Proposition}
\newtheorem{lemma}{Lemma}
\newtheorem{theorem}{Theorem}
\newtheorem{corollary}{Corollary}
\newtheorem{assumption}{Assumption}
\newtheorem{remark}{Remark}
\newtheorem{result}{Result}
\newtheorem{definition}{Definition}
\theoremstyle{definition}
\newtheorem{example}{Example}[section]

\section{Introduction}
\label{sec: introduction}

Understanding dynamic incentives is central in economics, and there is a substantial body of literature examining how best to structure rewards in dynamic principal-agent problems. For instance, the seminal papers of \citet{lazear1981agency} and \citet{harris1982theory} argue in favor of back-loading rewards, while \citet{holmstrom1987aggregation} highlight the effectiveness of linear reward allocations. 

A common assumption in this literature is that the principal’s ability to reward the agent is unconstrained, allowing for flexible incentive allocations. 
However, more often than not,
principals operate under budget constraints that limit the total resources available for rewards. This raises the question of how to optimally allocate a fixed budget to effectively motivate agents over multiple periods. We address this question by examining a scenario in which an agent’s task costs are correlated over time, and the principal, who does not know the agent’s costs, must distribute a fixed budget across periods to maximize performance.

We find that the total amount of available funds critically shapes the optimal design of rewards, introducing an important factor that has been largely overlooked in the existing literature.

Budget constraints and the need to motivate agents to exert effort over multiple periods are not merely theoretical; they arise frequently in practice. For instance, research funding agencies typically allocate grants under fixed budgets over a given time horizon, and many agencies make continued funding contingent on a midterm review that evaluates progress toward agreed performance targets. In doing so, they must decide how to allocate limited funds across project phases, such as a midterm evaluation and a final award, in order to incentivize sustained effort throughout the project’s duration.

In this paper, we study a two‑period model in which an agent can perform a task in each period, incurring privately observed costs \((c_1,c_2)\). A principal with a fixed budget \(w\) chooses how to allocate rewards across periods to maximize expected performance. In our benchmark, \(c_1\) and \(c_2\) are independently and uniformly distributed on \([0,1]\). We show that the total budget \( w \) plays a crucial role in determining the optimal reward scheme.

When \( w \) is low, it is optimal for the principal to concentrate on the agent’s first performance, regardless of whether it occurs in period one or two, by offering a high reward. We refer to this rewarding rule as \textit{sufficient performance targeting}. Because the total reward is fixed, a high reward for the first performance implies a low reward for the second performance, reducing the likelihood that 
an
agent will perform again. 
This rewarding rule is optimal 
because
with a low budget and independently distributed costs, it is improbable the agent will have low costs in both periods, making it more effective to focus resources on securing at least one performance.

By contrast, when \(w\) is high, the principal’s optimal rewarding rule offers a small reward for the first performance and reserves most of the budget for a substantial reward for the second performance, thereby incentivizing the agent to perform twice. We call this rewarding rule \textit{sustained performance targeting}. In this setup, the agent’s incentives are close to an “all or nothing” decision, since most of the reward is paid only after two performances. The higher budget increases the probability that the agent can afford to perform in both periods, thereby making a reward scheme that fosters continued effort optimal.

In our benchmark model, we assume the agent’s costs are independent across periods. We then extend the model to allow for more flexible cost structures, introducing the possibility that costs across periods may be correlated. We consider a scenario in which the principal can select an agent with a particular cost structure from a predefined, monotonically ranked set of cost dependencies, utilizing the concept of copulas. In this extended framework, the principal can not only design the rewarding rule but also choose among different cost correlation patterns.\footnote{Throughout, we use the terms “dependency” and “correlation” interchangeably to denote any form of statistical association between two random variables.} 

This extension is particularly relevant in environments 
in which
principals encounter a wide variety of agents, each with distinct cost correlation. Some agents may exhibit positively correlated costs across periods, others negatively correlated costs, while still others may face independent costs. In the grant-allocation context, for example, it is well documented that women are more strongly affected than men by work–family conflict, particularly when young children are present (see \citealp{BraunsteinBercovitz2013,LivingstonJudge2008}). This suggests that women with young children may experience greater intertemporal fluctuations in the opportunity cost of research. Heterogeneity in cost correlation can also arise from other sources. Disciplinary focus is one such factor: empirical researchers whose work is organized around a certain data set tend to face more persistent cost structures across periods, whereas theory-oriented researchers who frequently shift among ideas or modeling approaches may exhibit more dispersed costs over time.

Our findings show that the optimal rewarding rule identified in the benchmark case remains largely robust in this extended framework. Moreover, for a wide range of parameter values, negative intertemporal correlation in the agent’s costs is especially valuable for the principal. Interpreted in a grant-allocation setting, these results suggest that funding agencies may improve the effectiveness of their programs by prioritizing disciplines or project types in which researchers face greater intertemporal variation in research costs.

We next explain how cost correlation affects the agent’s incentives. When the budget is low, the optimal rewarding rule remains \textit{sufficient performance targeting}: the principal offers a high reward for the agent’s first performance. Negative cost correlation is helpful for two reasons. First, it eliminates the option value of waiting: an agent who draws a low cost today does not expect another low draw tomorrow and thus has no incentive to delay
performance.
Second, by inversely linking high and low costs across periods, negative dependence raises the probability that at least one period’s cost falls below the prize, increasing the likelihood of at least one performance.

When the budget is high, the principal switches to \textit{sustained performance targeting} by placing a large prize on the second performance. Here, negative dependence helps through a different channel: it reduces the variance of the total cost \(c_1+c_2\), so the prize covers the agent’s two‑period cost for a larger set of types, making the reward more effective.

To illustrate this point, assume the agent’s costs \((c_1, c_2)\) are each uniformly distributed on \([0,1]\), the total budget is \(w = 1\), and the principal adopts sustained performance targeting by allocating all rewards to the second performance. Under perfect positive correlation, \(c_2=c_1\), so an agent performs twice 
if and only if
\(c_1+c_2=2c_1\le 1\), i.e., \(c_1\le \tfrac{1}{2}\); thus exactly one half of 
the
agents complete both performances. Under perfect negative correlation, \(c_2=1-c_1\), so \(c_1+c_2= 1\) and every agent performs twice. Hence the fraction of agents who perform twice rises from $\frac{1}{2}$ to \(1\).

It is worth noting that our finding—namely, that introducing a negative dependency between the agent’s costs can be advantageous when the goal is \textit{sustained performance targeting}—parallels the literature on bundling, where multiple products are sold as a single package. It is well established that bundling tends to be more profitable than selling products separately when consumer valuations are negatively correlated (see, e.g., \citealp{Schmalensee1984}; \citealp{Long1984}; \citealp{McAfee1989}; \citealp{Chu2011}; \citealp{ChenRiordan2013}). In our setting, negative cost dependency similarly balances the agent’s total cost over time, increasing the likelihood of sustained performance and improving the efficiency of the reward allocation.

When the budget is at intermediate levels, however, our analysis indicates that independent or even positive dependency between the agent’s costs may be optimal. We leave a detailed explanation of this case to the main text.

After analyzing the case in which the principal selects an agent from a predefined, monotonically ranked family of cost correlations, we 
drop this restriction and allow the principal to choose \emph{any} joint distribution of costs across periods, as long as the marginal distribution in each period remains uniform. In this more general setting, we view a
\emph{scheme}
as a pair consisting of a reward rule and a cost correlation, and we fully characterize all 
optimal
schemes.

A first implication is that every optimal scheme uses one of two extreme reward rules: a purely sufficient rule, which allocates the entire budget to a single performance, or a purely sustained rule, which allocates the entire budget only after two performances. The purely sufficient rule, supported by a suitably negative cost correlation, is optimal only when the budget is relatively small. 
The purely sustained rule is optimal for all budget levels, 
but 
requires a 
specific form of cost correlation that cannot be described as simply positive or negative. In particular, for every agent who is induced to perform, the two-period costs must add up exactly to the budget.
Because this cost structure is highly tailored, it is unlikely to arise in practice and is therefore mainly of theoretical interest.

Throughout this paper, we assume that at each history, the principal offers a single reward to all agent types and does not screen agents by offering contracts. If the principal could screen agents, the problem would align with the literature on dynamic screening. Owing to the complexity of dynamic contracting, analysis in that field has largely been restricted to environments where a “first-order approach” is applicable \citep{battaglini2019optimal}. In our model, the cost correlation across periods can be arbitrary, and the budget constraint is exogenously imposed. Consequently, the first-order approach does not apply, making the problem challenging to solve using traditional dynamic screening techniques.

The paper is organized as follows. Section \ref{sec: related literature} discusses related literature. Section \ref{section:model} presents the model. 
Section \ref{iidsection} studies the benchmark case where the costs are independent.
Section \ref{copuanaly} studies the optimal cost correlation under
the Farlie-Gumbel-Morgenstern (FGM)
copulas family. 
Section \ref{optsection} studies the optimal cost correlation under no restrictions. 
Section \ref{sec: conclusion} concludes.


\section{Related Literature.}
\label{sec: related literature}

The allocation of rewards in dynamic principal-agent problems---whether to back-load them or implement a linear structure---has been a central topic of debate in the literature. Researchers have examined various contexts in which different approaches may be optimal, depending on the characteristics of the principal-agent relationship and the nature of the tasks involved.

The seminal papers of \citet{lazear1981agency} and \citet{harris1982theory} argue in favor of back-loading rewards. \citet{lazear1981agency} suggests that delaying compensation effectively deters the agent from prematurely ending the employment relationship, as doing so would mean forfeiting substantial future rewards. This creates strong incentives for the agent to remain with the firm and maintain effort over time. \citet{harris1982theory} provide a different rationale, emphasizing the role of risk sharing. In their model, both the principal and the agent learn more about the agent's ability over time. The back-loading of rewards emerges as an optimal strategy due to its insurance effect: a smooth, increasing wage path helps spread risk over time, ensuring that the risk-averse agent is not overly exposed to income volatility, which would otherwise necessitate a higher wage premium.

In contrast, \citet{holmstrom1987aggregation} provide compelling arguments for linear allocation of rewards. They focus on situations where the agent's ability is known, and the agent exerts effort to influence the performance outcome, which is subject to random noise. They demonstrate that for risk-averse agents, linear contracts are effective because they provide a clear and straightforward link between compensation and performance without requiring detailed monitoring of how the agent allocates effort across multiple tasks. The linearity in compensation strikes a balance between providing sufficient incentives and offering insurance against risks beyond the agent's control. 

\citet{sannikov2008continuous} extends these insights to a continuous-time framework. He shows that over short horizons the optimal contract is approximately linear in incremental output. Over longer horizons, however, optimal compensation can display rich non-linear dynamics in wages and effort. In particular, when the agent’s continuation value is low, the contract may back-load compensation for a reason similar to \citet{lazear1981agency}.

Most studies on dynamic principal-agent problems with private information typically assume that the agent's type is either fixed over time or independently and identically distributed across periods. 
Recent works have started to explore the implications of dependency in the agent's type across periods.

\citet{GuoHorner2020} examine a principal-agent model where the agent's private value for a good evolves according to a two-state Markov chain. The agent always desires the good, while the principal aims to allocate it only when the value is high. Without monetary transfers, the optimal allocation rule has the following properties: when the agent's value is high, the good is allocated; when the value is low, the principal offers future entitlements to deter the agent from falsely claiming a high value. As value persistence increases, a low value today has a greater impact on future expectations, forcing the principal to promise more in the future for the low-value agent to forgo the current unit, ultimately reducing overall efficiency. Interestingly, negative correlations are beneficial: a stronger negative correlation enhances the efficiency of future promises because preference misalignment is only temporary.

There are two key differences between \citet{GuoHorner2020} and our paper. First, \citet{GuoHorner2020} do not allow for monetary transfers, whereas our model permits the principal to make transfers. Second, in their study, allocating the good is efficient only when the agent's type is high, so the challenge lies in deterring low types from claiming the good. In contrast, our paper considers a setting where performance is always desirable under complete information, making it unclear which type of agent should optimally be induced to perform. As a result, although both papers find that less persistence in the agent’s type is beneficial, the underlying reasoning differs fundamentally.

Our paper is also connected to the dynamic screening literature, where the agent's type is correlated across periods. In this context, the principal offers the agent a contract to screen their type (e.g., \citealp{battaglini2005long}; \citealp{pavan2014dynamic}; \citealp{battaglini2019optimal}). Due to the complexity of dynamic contracts, analysis has been confined to environments where a form of the ``first-order approach'' is applicable. \citet{battaglini2019optimal} provide conditions under which this approach works. Specifically, they show that the first-order approach fails when the frequency of principal-agent interactions is sufficiently high---or equivalently, when the discount factor, time horizon, and type persistence are sufficiently large. In our project, the cost correlation across periods can be arbitrary, and there is an exogenously given budget constraint. Consequently, the problem is challenging to solve using dynamic screening techniques.

Intertemporal dependence can also arise from the technology of multi-stage tasks, rather than from correlation in costs. \citet{GreenTaylor2016} study optimal contracting for a multistage project in which the agent privately observes intermediate progress and the principal must sustain incentives until completion. They show that optimal schemes can employ history-dependent instruments, such as soft deadlines and termination rules, together with the strategic use of self-reported progress to maintain effort across stages. Although their setting features moral hazard with hidden progress (rather than privately known task costs), it shares with our paper the central theme of designing dynamic incentives when the principal values sustained success over time.

Our paper is also related to work in the auction and procurement framework that studies dynamic incentives when tasks are purchased sequentially and the relevant cost environment evolves over time. \citet{JofreBonetPesendorfer2014} analyze sequential procurement auctions and show that the cost-minimizing auction format depends on whether the two items are complements or substitutes. \citet{CisternasFigueroa2015} study sequential procurement when the first-period winner can invest before the second auction to improve future costs, illustrating how dynamic mechanisms trade off current procurement costs against future incentive provision. Finally, \citet{CheGale1998} show that private budget constraints can substantially alter performance comparisons across standard auction formats. While these papers focus on competition among multiple bidders, whereas we study a single-agent reward scheme with an exogenously given transfer budget, they share the broader message that intertemporal dependence and budget limitations are crucial for optimal dynamic design.

Our paper is also related to the literature on bundling. It is well recognized that mixed bundling is profitable when consumer valuations are negatively correlated (e.g., \citealp{Schmalensee1984}; \citealp{Long1984}; \citealp{McAfee1989}; \citealp{Chu2011}).
However, these studies focus on specific cases. \citet{ChenRiordan2013} introduce a copula approach to model the joint distribution of consumer valuations for goods and provide sufficient conditions for the profitability of bundling. They demonstrate that mixed bundling is generally more profitable than separate selling when valuations for the two products are negatively dependent. The intuition is 
as follows: introducing a small bundle discount from optimal separate-selling prices reduces profits from consumers already buying both goods but increases profits from those switching from purchasing only one good to buying both. Under negative dependence---where high valuations for one product align with low valuations for the other---the gains outweigh the losses. Our result aligns with this literature: when the principal's goal is to induce double performance and has a high budget, a negative cost correlation is optimal, as it combines a high cost with a low cost, thereby averaging out the total cost of performing twice.

Another related paper is \citet{AuChen2021}, who study a team production problem and find that maximizing skill diversity among workers in teams is optimal. The benefit of worker heterogeneity arises from peer effects: by pairing high-ability agents capable of imposing significant sanctions with low-ability agents prone to moral hazard---since ability and effort are complementary---the principal effectively leverages peer monitoring. This negative assortative matching minimizes overall incentive costs, thereby benefiting the principal.

Finally, our paper is closely related to \citet{leshem2022option}, 
who examine an enforcement agency regulating the behavior of potential violators in a two-period model with constrained sanctions. They make two restrictive assumptions that, when translated into our framework, correspond to the following:
(a) the agent's costs are independent over time, and
(b) the compensation to an agent who performs once is the same,
regardless of whether the performance occurs in the first or second period.
\citet{leshem2022option} show that the optimal sanction scheme maximizes sanctions for repeat offenders when the total sanction is high, and exhibits leniency for repeat offenders when it is low.

\section{Model}
\label{section:model}

Two risk-neutral players, a principal (she) and an agent (he) interact for two periods. Players do not discount their payoffs. In each period the agent chooses an action $e\in \{P, N\}$, where $P$ stands for \emph{Perform} and $N$ stands for \emph{Not perform}.
The action chosen by the agent is observed by the principal.
The cost for the agent for $N$ is zero. The cost for the agent for $P$ is random and may vary among the periods. Moreover, the cost for $P$ is the agent's private information.
Denote by $A$ and $B$ the cost for the agent in periods 1 and 2, respectively.
The way $A$ and $B$ are determined will be explained shortly.

To induce the agent to perform, the principal can offer the agent a rewarding rule,
which is a function $r$ from the set of all finite histories (which consists of the agent's past actions) to the non-negative reals: at each history $h$, the agent obtains $r(h)$ from the principal.
We assume that the principal has a total budget $w$.
This implies that the sum of the offered reward after any two consecutive finite histories cannot exceed $w$.
We also assume that unused budget is valueless for the principal.

The last ingredient of the model is the way the costs $A$ and $B$ are selected. 
The principal has at her disposal a set $\calG$ of joint distributions of pairs of costs $(A,B)$,
and she is free to choose any distribution $G \in \calG$.
This assumption, that the principal can choose the cost dependence structure, is natural in environments with a heterogeneous pool of agents who differ in characteristics such as gender, disciplinary focus, or career stage. Each profile of characteristics induces a distinct pattern of cost dependence, so by selecting an agent 
with a particular profile, the principal effectively chooses a cost dependence structure and can tailor the corresponding rewarding rule.

We assume that for every $G\in\calG$, the marginal distribution of the cost in each period is \emph{uniform} on $[0,1]$. This normalization keeps fixed the distribution of costs in each period and allows us to isolate the role of intertemporal dependence in shaping incentives. Without it, comparisons across dependence structures would be less informative, because the principal would prefer distributions under which costs are more likely to be low.

We study three cases. In the first (Section~\ref{iidsection}), $\calG$ contains a single distribution in which the two period costs are independent. In the second (Section~\ref{copuanaly}), $\calG$ consists of a parametric family of joint distributions whose dependence varies monotonically with a single parameter. In the third (Section~\ref{optsection}), $\calG$ contains all joint distributions on $[0,1]^2$ with uniform marginals.

The goal of the principal is to maximize the performance level (i.e., the expected number of times the agent performs),
and the goal of the agent is to maximize the difference between his total reward and total cost.
For simplicity, we assume that whenever an agent is indifferent between performing and not performing, he performs.\footnote{Because the agent's type is continuous, the tie-breaking rule is used only with probability 0.}  

Figure~\ref{fig: timeline} describes the timeline of the game:
First the principal chooses the cost distribution $G$ and announces the rewarding rule $r$ to the agent. The agent then privately learns his period 1 cost $A$ and decides whether to perform in period 1. 
Then, the agent
privately learns his period 2 cost $B$ and decides whether to perform in period 2. Finally, the agent obtains the rewards, depending on the rewarding rule and the agent's past actions.

We refer to the two choices $(G, r)$ made by the principal as a \emph{scheme}. 
Our goal is to study the 
optimal scheme
-- 
the scheme that maximizes the expected agent's performance given the budget constraint,
and to understand the driving forces behind it.

\begin{figure}[ht]
\includegraphics[scale=0.5]{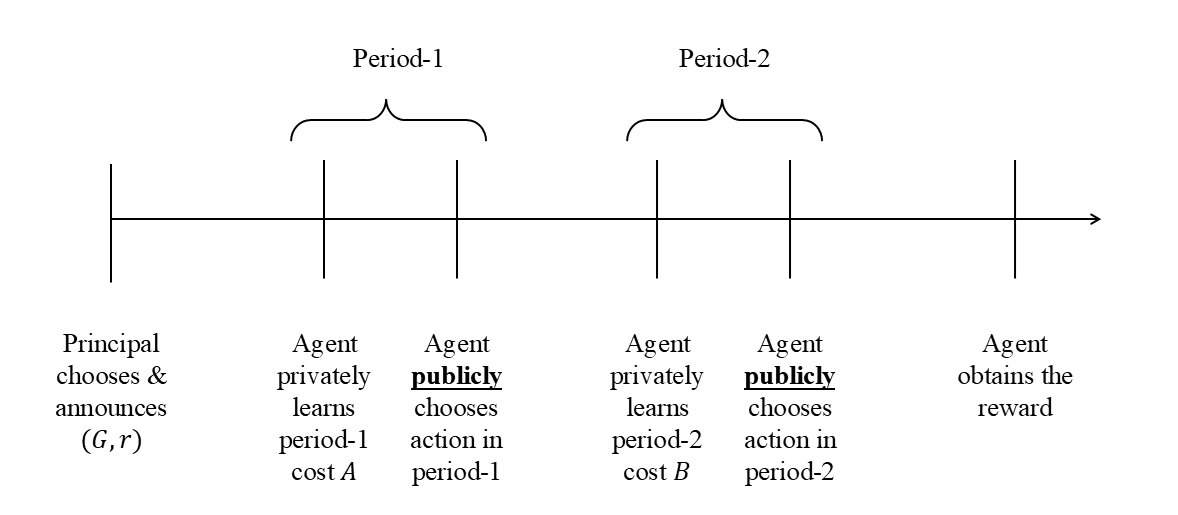}
\centering
\caption{The timeline of the game.}
\label{fig: timeline}
\end{figure}

When the principal’s budget satisfies \(w \geq \tfrac{3}{2}\), she can choose \(G\) such that \(A\) and \(B\) are i.i.d. and commit to paying the entire budget $w$ only if the agent performs in both periods. 
In this way, she can induce the agent to perform in both periods. 
Indeed, conditional on having performed in period~1, performing in period~2 yields net payoff \(w-B \geq w-1>0\), while not performing yields \(0\); hence the agent performs in period~2 whenever he performed in period~1. Anticipating this, in period~1 the agent’s expected payoff from performing is \(w-A-\mathbb{E}[B]=w-A-\tfrac{1}{2}\), which is nonnegative for every \(A\in[0,1]\) when \(w\geq \tfrac{3}{2}\). The agent therefore performs in period~1, and consequently also in period~2.
Hence, we focus on the non-trivial case with the following assumption.

\begin{assumption}
$w< \frac{3}{2}$.
\end{assumption}

\begin{remark}
\begin{enumerate}
\item
The assumption that the function $r$ is non-negative captures the idea that the agent has limited liability, so that the agent cannot be penalized for not performing.\color{black}
\item 
We assume that unused budget is valueless for the principal. This fits scenarios where the performance of the agent is not comparable with monetary gain,\footnote{For example, in an environmental protection context, it is difficult to weigh the benefits of reducing pollutants against monetary gains.} 
or the principal has no control over the residual reward.\footnote{For instance, unused budget is returned to the government at the end of the year.}
\item In our model the principal selects one out of a pool of agents and maximizes the expected number of performances of that agent. An alternative interpretation is that the principal selects a sub-population with the same cost distribution, and tries to maximize overall expected performances of that sub-population.
\end{enumerate}
\end{remark}
\color{black}

\subsection{Simple properties of the optimal rewarding rule}

It can be shown that to maximize the performance level, it is w.l.o.g. to focus on rewarding rules under which action \color{black} $N$ (at every history) is not rewarded. With this restriction, every rewarding rule can be characterized by three parameters $(x, y, z)$, where $x = r(P)$  is the reward the agent obtains for choosing $P$ in period 1; $y = r(N,P)$ is the reward the agent obtains for choosing $P$ in period 2 if he chose $N$ in period 1; and $z = r(P,P)-r(P)$ is the reward the agent obtains for choosing $P$ for a second time, see Figure \ref{fig: uniform zero}. Note that $z$ captures the incremental reward from a second performance, and the agent's total reward by choosing $P$ in both periods is $x+z=r(P, P)$.

\begin{figure}[ht]
\includegraphics[scale=0.8]{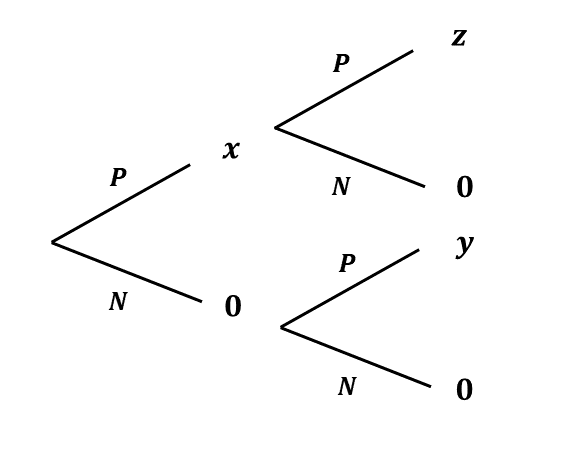}
\centering
\caption{The rewarding rule.}
\label{fig: uniform zero}
\end{figure}

We start with a simple observation that bounds the maximum number of performances. In each period, an agent with type $c>w$ does not benefit from performing in that period (regardless of what he does in the other period). Therefore, the maximum possible performance in each period is $F(w)=w$,
where $F$ is the CDF of the uniform distribution on $[0,1]$.
This provides an upper-bound on the performance level.

\begin{lemma}
\label{upperboundle}
In equilibrium, the agent with type $c>w$ in a certain period does not perform in that period. 
Hence, the upper-bound on performance level is $2w$.
\end{lemma}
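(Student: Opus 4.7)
The plan is to handle each of the two periods separately using the $(x,y,z)$ parameterization of the rewarding rule, and then to derive the performance bound. The budget constraint gives $x\le w$, $y\le w$, and $x+z\le w$, so in particular $z\le w-x\le w$.

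Period~2 is essentially a one-shot comparison: if the second-period cost satisfies $c>w$, then whether or not the agent performed in period~1, the reward for a period-2 performance is bounded by $\max(y,z)\le w<c$, so performing is strictly dominated by not performing. For period~1 I would compare continuation values. Let $V_P(c)$ and $V_N(c)$ denote the expected total payoffs from performing and not performing in period~1 with cost $c$:
\begin{equation*}
V_P(c)=x-c+\mathbb{E}\bigl[\max(z-B,0)\mid A=c\bigr], \qquad V_N(c)=\mathbb{E}\bigl[\max(y-B,0)\mid A=c\bigr].
\end{equation*}
The key estimate is $V_P(c)\le x-c+z\le w-c$, obtained from $\max(z-B,0)\le z$ together with $x+z\le w$. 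Since $V_N(c)\ge 0$, whenever $c>w$ we obtain $V_P(c)<0\le V_N(c)$, so the agent strictly prefers not to perform. The upper bound on the performance level follows: in each period the agent performs only when his cost in that period is at most $w$, an event of probability $F(w)=w$ under the uniform marginals, so the expected total number of performances is at most $2w$.

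The only step requiring genuine care is the period-1 argument, because an agent with $c>w$ might in principle be tempted to perform in order to access a potentially large second-period incremental reward $z$. This temptation is neutralized by the global budget inequality $x+z\le w$, which caps the total reward along the path $(P,P)$ at $w$ and hence forces $V_P(c)<0$ whenever $c>w$, regardless of the joint distribution of $(A,B)$.
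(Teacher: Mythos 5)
Your proposal is correct and follows essentially the same route as the paper, which justifies this lemma only by the one-line observation that a type $c>w$ cannot benefit from performing because the total reward along any path is capped at $w$; your period-by-period comparison (bounding the period-2 reward by $\max(y,z)\le w$ and the period-1 continuation value by $x+z-c\le w-c<0$) is just a careful formalization of that remark, valid for any joint distribution of $(A,B)$. The only cosmetic caveat, shared with the paper's own phrasing, is that $F(w)=w$ holds only for $w\le 1$; for $w>1$ the per-period probability is $1$, so $2w$ remains a (non-tight) upper bound.
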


The next lemma states that we can assume w.l.o.g.~that the reward to an agent who performed twice is the full budget $w$.

\begin{lemma}
\label{nowastetwo}
It is without loss of generality to assume that under the optimal rewarding rule we have $x+z=w$.
\end{lemma}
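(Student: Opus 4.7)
My plan is to show that any optimal rule $(x,y,z)$ with $x+z<w$ can be replaced by the rule $(x,y,z')$ with $z':=w-x$ at no loss of expected performance. The modified rule is budget-feasible: $x+z'=w$, the constraint $y\le w$ is unchanged, and since $z\ge 0$ together with $x+z\le w$ gives $x\le w$, we have $z'\ge 0$. Hence it suffices to establish that the modification weakly increases expected total performance; this yields the existence of an optimal rule with $x+z=w$.

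To trace the effect on behavior, I would work backwards. Conditional on having performed in period~$1$, the agent performs in period~$2$ iff $B\le z$; raising $z$ to $z'\ge z$ weakly enlarges this set. Conditional on not having performed in period~$1$, the period-$2$ decision depends only on $y$ and is unchanged. In period~$1$, the agent of type $A$ performs iff
\[
x-A+\mathbb{E}\bigl[(z-B)^+\,\big|\,A\bigr]\;\ge\;\mathbb{E}\bigl[(y-B)^+\,\big|\,A\bigr],
\]
and the left-hand side is non-decreasing in $z$ because $(z'-b)^+\ge (z-b)^+$ pointwise; hence the set of first-period performers also weakly enlarges.

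To aggregate these effects, I would partition first-period types into three groups: (i) those who perform in period~$1$ under both rules; (ii) those who newly perform in period~$1$ under the modified rule; and (iii) those who never perform in period~$1$. Group~(i) has unchanged period-$1$ performance and weakly greater period-$2$ performance, group~(iii) is entirely unaffected, and for group~(ii), period-$1$ performance rises by exactly $1$ while period-$2$ performance changes by $\mathbf{1}[B\le z']-\mathbf{1}[B\le y]$, whose absolute value is at most $1$. So every type's net contribution is nonnegative, and integrating over $(A,B)$ under the fixed distribution $G$ gives the desired weak improvement.

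The only delicate step is group~(ii): when $y>z'$, a switching type may lose a second-period performance, and one must check that the $+1$ gain from the new period-$1$ performance absorbs this loss. The crude bound $\mathbf{1}[B\le z']-\mathbf{1}[B\le y]\ge -1$ does exactly this, and this is the only place where the argument could plausibly fail.
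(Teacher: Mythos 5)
Your proof is correct, and it takes a genuinely different route from the paper's. The paper argues in the aggregate: it writes the expected performance in the i.i.d.\ case in the closed form $F(\overline c)\bigl(1+F(z)-F(y)\bigr)+F(y)$, observes that the period-1 threshold $\overline c$ is increasing in $z$ (holding $x$ and $y$ fixed) and that the coefficient $1+F(z)-F(y)$ is nonnegative, and concludes that raising $z$ to $w-x$ weakly raises the objective. Your argument instead is a type-by-type (indeed realization-by-realization) dominance comparison: the set of period-1 performers weakly expands because $(z'-b)^+\ge(z-b)^+$ pointwise, continuing performers can only gain a second performance, and a switching type's possible loss of a period-2 performance (when $y>z'$) is absorbed by the $+1$ from the new period-1 performance --- exactly the delicate case you flag, and your crude bound does handle it. What your approach buys is generality: it never uses the functional form of the objective or independence of $A$ and $B$, so it proves the lemma for an arbitrary joint distribution $G$ with the stated marginals, whereas the paper's formal proof is tied to the i.i.d.\ formula (its general-$G$ version appears only as the informal intuition following the lemma, which you have essentially formalized). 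What the paper's approach buys is brevity, since the closed-form objective is already on the table for the subsequent optimization.
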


\begin{proof}
See Appendix \ref{pnowastetwo}.
\end{proof}

The intuition for this result is as follows. Suppose $x+z<w$, and consider the effect of increasing $z$ to $w-x$. 
If the agent performs in period 1, then \color{black} the probability that he will perform in period 2 does not decrease, since his reward for the second performance increases. 
If the agent does \color{black} not perform in period 1
when $z$ is lower, \color{black} then an increase in $z$ makes performing in period 1 more favorable than before, and the agent may decide to perform in period 1 (and later in period 2 as well). As a result, it is always optimal for the principal to fully use her budget to reward \color{black} two performances.

Note that the above argument does not apply to $y$, and it is not clear what is the effect of 
increasing
$y$ (fixing $x$ and $z$) on the overall performance level. Indeed, as $y$ increases, two effects take place: On the one hand, if the agent does not perform in period 1, a higher level of $y$ increases the probability the agent will perform in period 2. On the other hand, a higher level of $y$ discourages an agent from performing in period 1, since by waiting for another period the agent may get a higher reward.

\section{The Benchmark Model --- I.I.D.}
\label{iidsection}

We begin by analyzing the benchmark model, in which the agent's costs in the two periods are independent.  
Formally, 
$\calG$ contains a single distribution:
the one in which $A$ and $B$ are independent.
We will characterize the optimal rewarding rule $(x, y, z)$ in this case.

In period 2 an agent performs if and only if 
$B$
is lower than the period 2 reward: That is, 
$B\leq z$
if he performed in period 1, and 
$B\leq y$
if he did not perform in period 1. Hence, if the agent with cost $c_1$ performs in period 1, his expected payoff is 
\begin{equation}
\label{equ:payoff:1}
(x-c_1)+\int_0^{\min(z, 1)}(z-c)\, \rmd c,
\end{equation}
and if he does not perform in period 1, his expected payoff is 
\begin{equation}
\label{equ:payoff:2}
0+\int_0^{\min(y, 1)}(y-c)\, \rmd c.
\end{equation}

An agent with cost $c_1$ will perform in period 1 if and only if his payoff from performing
in period 1 (given by Eq.~\eqref{equ:payoff:1}) 
is higher than his payoff from not performing in period 1 (given by Eq.~\eqref{equ:payoff:2}):

\begin{equation}
 \overset{\text{Agent performs in period 1}}{\overbrace{%
(x-c_1)+\int_0^{\min(z, 1)} (z-c)\, \rmd c}} \quad \geq \text{ }\overset{%
\text{Agent does not perform in period 1}}{\overbrace{%
0+\int_0^{\min(y, 1)} (y-c)\, \rmd c}}.  
\label{eq: payoff for plaintiff}
\end{equation}

The cost threshold $\overline{c}$ for period 1 performance can be solved by finding the minimal $c_1$ for which Eq.~\eqref{eq: payoff for plaintiff} holds.

The principal chooses $(x, y, z)$ to maximize
\begin{equation}
 \overset{\text{period 1}}{\overbrace{%
 F(\overline{c})}} + \text{ }\overset{%
\text{period 2, } P \text{ in period 1}}{\overbrace{%
F(\overline{c}) \cdot F(z)}} + \text{} \overset{\text{period 2, $N$ in period 1}}{\overbrace{%
 \big(1-F(\overline{c})\big) \cdot F(y)}},
\label{uniformula}
\end{equation}
subject to the constraints

\[
  x+z = w, \quad  0 \leq x \leq w, \quad 0 \leq y \leq w, \quad and \quad 0 \leq z \leq w.
\]

The following proposition characterizes the solution of the maximization problem (\ref{uniformula}).

\begin{proposition}
\label{uniformop}
     Suppose the cost of performing the task in each period is independently and uniformly distributed on $[0,1]$. 
     Then an optimal rewarding rule is given by
\begin{equation}
(x^*,y^*,z^*) =
 \begin{cases}
 (w, w, 0), &\text{ for } 0 \leq w \leq 0.6,\\
 (w-g(w), w, g(w)), &\text { for } 0.6 \leq w \leq 1,\\
 (0, h(w), w), & \text { for } 1 \leq w \leq 1.4, \\
 (0, 0, w), & \text { for } 1.4 \leq w \leq 1.5,
 \end{cases}
 \label{opiid}
\end{equation}
where 
\begin{gather}
  g(w)=\frac{1}{3}(w+1-2 \sqrt{w^{2}-2.5 w+1.75}), \\
  h(w)=\frac{1}{3}(w+1-2 \sqrt{w^{2}+0.5 w-1.25}).
\end{gather}
\end{proposition}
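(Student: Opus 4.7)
The plan is to treat (\ref{uniformula}) as a two-variable constrained optimization and exhaustively analyze the active-constraint patterns. By Lemma~\ref{nowastetwo} I set $x = w - z$, so the free variables are $(y, z)$ with $y, z \in [0, w]$. A simple dominance argument yields $y \leq \min(w, 1)$ without loss, because raising $y$ above $1$ leaves the period 2 performance probability after $N$ saturated at $1$ while only lowering the period 1 threshold $\overline{c}$. I will then write $\overline{c}$ and the objective $\Phi(y, z)$ explicitly, splitting into the two regions $z \leq 1$ and $z \geq 1$ determined by the kink of the agent's period 2 option value at $z = 1$; in particular, for $z \geq 1$ the threshold simplifies to $\overline{c} = w - \tfrac{1}{2} - y^2/2$ and $\Phi$ becomes independent of $z$.

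The key structural observation is that on the interior region $\{(y, z) : y \in (0, \min(w, 1)),\ z \in (0, 1)\}$, adding the two first-order conditions $\partial_y \Phi = 0$ and $\partial_z \Phi = 0$ collapses to the identity $2(y - z)^2 = 0$, forcing $y = z$, after which the FOCs force $w = 1$. Thus for $w \neq 1$ no interior critical point exists and the maximum of $\Phi$ must lie on the boundary of the feasible region. The analysis then reduces to four active-constraint patterns that correspond exactly to the four cases in the statement: (i) the corner $(y, z) = (w, 0)$, at which the KKT condition $\partial_z \Phi \leq 0$ becomes $w^2 - 4w + 2 \geq 0$, i.e., $w \leq 2 - \sqrt{2}$; (ii) the edge $y = w$ with $z \in (0, 1)$ interior, where the one-dimensional FOC $\partial_z \Phi = 0$ is the quadratic $3z^2 - 2(w+1)z + (4w - 2 - w^2) = 0$ whose smaller root is exactly $g(w)$; (iii) the edge $x = 0$ (equivalently $z = w$) with $y$ interior, where the analogous FOC in $y$ is a quadratic whose smaller root is $h(w)$; and (iv) the corner $(y, z) = (0, w)$, giving $\Phi = 2w - 1$.

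Finally, I stitch the four candidates together by comparing their $\Phi$-values as functions of $w$ to identify the globally optimal regime on each subinterval. The transitions are read off from the boundary equalities $g(w) = 0$ and $h(w) = 0$ together with the algebraic equality of the two interior-FOC candidates (ii) and (iii) at $w = 1$, at which point both yield objective $29/27$. The main obstacle is this middle comparison, because both (ii) and (iii) are interior FOCs with different active constraints rather than plain corner candidates, so I will use the equality at $w = 1$ together with the monotone behavior of each candidate's $\Phi$-value in $w$ to conclude that (ii) dominates to the left and (iii) dominates to the right; the remaining computations are routine differentiation and KKT bookkeeping.
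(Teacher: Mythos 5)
Your proposal is correct and follows essentially the same route as the paper's proof: reduce to $(y,z)$ via $x=w-z$, use the identity $\partial_y\Phi+\partial_z\Phi=(z-y)^2$ (you write $2(y-z)^2$, but the factor is immaterial) to rule out interior optima and push the maximizer to the edges $y=w$ or $z=w$, solve the one-dimensional FOCs to obtain $g(w)$ and $h(w)$, and treat $z\ge 1$ separately where $\Phi$ is constant in $z$. Your only loose step is the final stitching --- equality of candidates (ii) and (iii) at $w=1$ plus monotonicity of each value in $w$ does not by itself determine which dominates on either side, so you still need a direct comparison of the two value functions --- but the paper's own proof is equally terse on exactly this point.
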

When $w \leq 1$, the rewarding rule in \eqref{opiid} is uniquely optimal. 
When $w > 1$, a rewarding rule is optimal if and only if it is of the form $(w-z,\,y^*,\,z)$ for some $z \geq 1$; in particular, no rewarding rule outside this class is optimal.

\begin{proof}
See Appendix \ref{puniformop}.
\end{proof}

The function $g$
increases from 0 to $\frac{1}{3}$ for $w\in[0.6, 1]$, and the function \black $h$ decreases from $\frac{1}{3}$ to 0 for $w\in[1, 1.4]$. See Figure \ref{optimalRewards} for a graphical illustration of the optimal rewarding rule. In particular, when $w=1$, both rewarding rules \black $(x=\frac{2}{3}, y=1, z=\frac{1}{3})$ and $(x=0, y=\frac{1}{3}, z=1)$ are optimal. 

\color{black}

\begin{figure}[ht]
\includegraphics[scale=0.6]{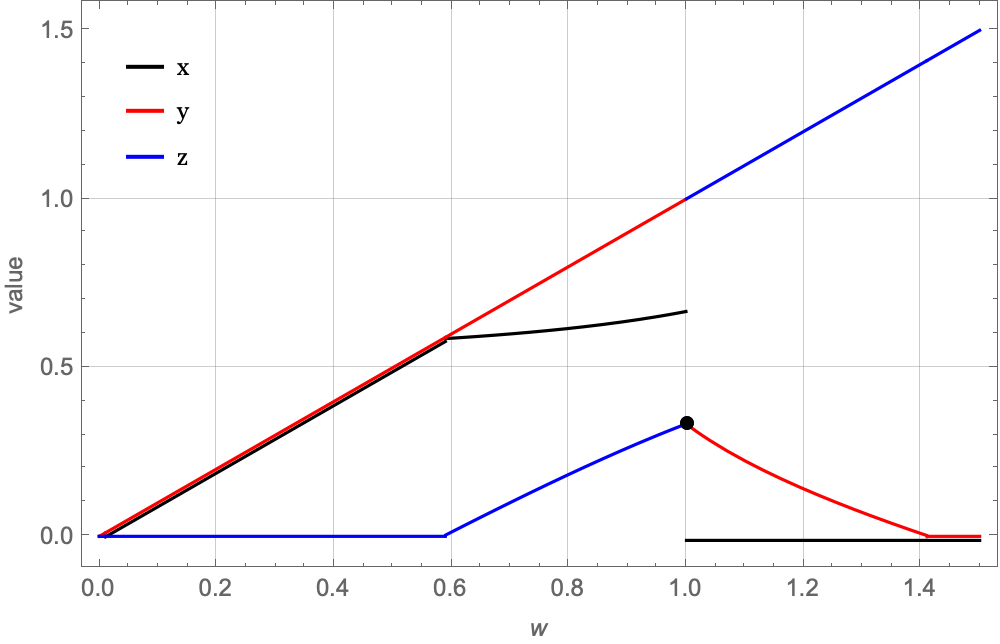}
\centering
\caption{Optimal rewarding rule in Proposition~\ref{uniformop}.}
\label{optimalRewards}
\end{figure}

What we learn from Proposition \ref{uniformop} is that when the budget \( w \) is low, it is optimal to set \( x \) and \( y \) large, and \( z \) small. In this situation, the principal’s focus is on inducing a single performance, and the agent’s first performance—whether in period one or period two—receives a substantial reward. We refer to this strategy as \emph{sufficient performance targeting}, where the principal’s aim is to secure at least one instance of performance.

We can further divide this case into two subcases. If \( w \in (0, 0.6) \), the optimal rule takes the form \((x = w,\, y = w,\, z = 0)\), which we refer to as \emph{purely sufficient}, indicating that \emph{only} the first performance is rewarded. For \( w \in (0.6, 1) \), the principal still emphasizes sufficient performance targeting but 
has enough budget to encourage a
second performance; the optimal rule becomes \emph{partially sufficient}, allocating a positive amount also to the second performance.

When the budget \( w \) is large, it is optimal to set \( x \) and \( y \) small, and \( z \) large. The principal’s focus 
shifts to inducing two performances, and the agent receives a significant reward only if he performs in both periods. We refer to this strategy as \emph{sustained performance targeting}, where the goal is to foster ongoing effort by offering a more substantial reward for the second performance.

We can also divide this scenario into two subcases. When \( w \in (1.4, 1.5) \), the optimal rule takes the form \((x = 0,\, y = 0,\, z = w)\), which is referred to as \emph{purely sustained}, indicating that only the second performance is rewarded. If \( w \in (1, 1.4) \), many agent types do not perform in the first period, making it costly to set \( y = 0 \). The optimal rule is thus \emph{partially sustained}, assigning a positive reward even if the agent only performs in period 2. 

In summary, when \( w \) is low, the optimal rewarding rule is \emph{purely sufficient} targeting. Indeed, with a limited budget and independently distributed costs, it is unlikely that the agent will have low costs in both periods, making it more effective to concentrate resources on ensuring at least one performance. Conversely, when \( w \) is high, the optimal rewarding rule is \emph{purely sustained} targeting. This is because the larger budget raises the probability that the agent can afford to perform in both periods, thereby favoring a scheme that encourages continued effort. For budgets in the middle, the principal must balance the two extremes, resulting in a mixture of sufficient and sustained strategies—\emph{partially sufficient} or \emph{partially sustained}—that allocates
rewards across both performances in a more nuanced way.

\black

\section{Copulas and the Role of Dependence Level on Overall Performance.}\label{copuanaly}

The previous section studies the case where the agent's costs in the two periods are independent. In this section, we assume that the principal can select the joint cost distribution from a predefined set of distributions, with varying degrees of dependency among the two periods: A parameter $\theta$ is used to capture the monotonic change of dependence level. The objective is to determine the optimal \emph{scheme}, defined as a combination of $\theta$ and rewarding rule \((x,y,z)\).

\subsection{Modeling dependence level --- FGM copulas}

We employ copulas to capture the dependence level of costs across two periods. A \emph{copula} links the marginal distributions of random variables to create a joint distribution, facilitating the manipulation of dependence while maintaining constant marginal distributions. When the marginal distributions are uniform, the copula is synonymous with the multivariate cumulative distribution of $(A,B)$.

Our analysis centers on the Farlie-Gumbel-Morgenstern (FGM) copula family with uniform marginal distributions. The joint distribution under FGM copula is defined as
\begin{equation}
    G_{\theta\color{black}}(a, b)=P_{\theta}(A\leq a, B\leq b) = ab\bigl(1 + \theta(1 - a)(1 - b)\bigr), \ \ \ a,b \in [0,1],\color{black}
    \label{eq: FGM Copula}
\end{equation}
where \(\theta\) ranges from $-1$ to 1. The parameter \(\theta\) indicates the level of dependence between \(A\) and \(B\): a more positive (resp., negative) \(\theta\) corresponds to \color{black} a stronger positive (resp., negative) correlation.

The FGM copula is widely used in finance, economics, and insurance, largely because of its analytical simplicity and tractability; see, for example, the applications surveyed in 
\citet{hutchinson1990continuous,Conway2014FGM}. 
Due to its inherent properties, this family can model only relatively weak dependence: for the FGM copula in (\ref{eq: FGM Copula}), the Spearman correlation between the two random variables is $\frac{\theta}{3}$, so the correlation between $A$ and $B$ ranges from $-\frac{1}{3}$ to $\frac{1}{3}$.

\subsection{Optimal scheme under FGM Copulas}

In this section, we assume that the cost distributions are restricted to FGM copulas, as shown in Eq.~(\ref{eq: FGM Copula}),
that is, 
$\calG = \{ G_\theta \colon -1 \leq \theta \leq 1\}$.
For each $w$, we numerically solve for the optimal combination of rewarding rule $(x, y, z)$ and the cost correlation $\theta$. These numerical computations are implemented in Mathematica (code available from the authors upon request). The results are summarized in Figure \ref{fig:2}.

\begin{figure}[ht]
\makebox[\textwidth][c]{
\begin{minipage}{.6\textwidth}
\centering
\includegraphics[width=\linewidth]{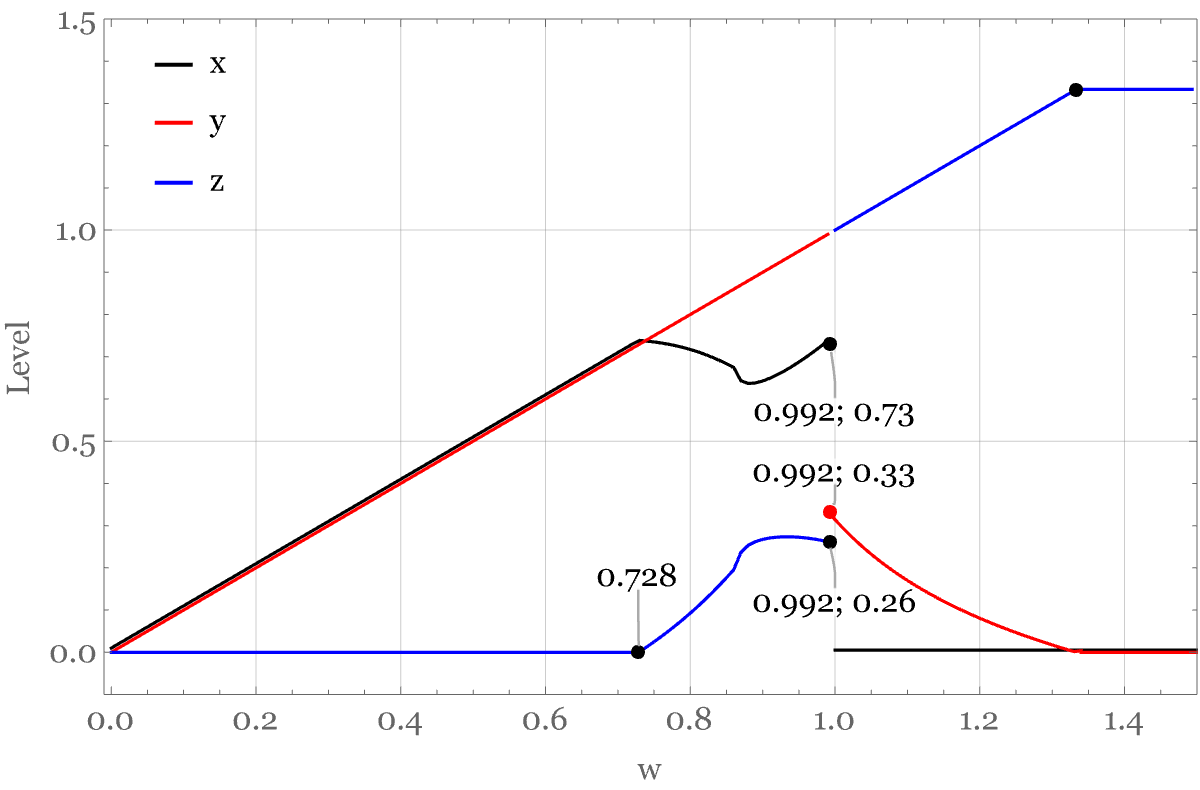}
(a) Optimal $x,y$ and $z$
\end{minipage}
\hspace{2mm}
\begin{minipage}{.6\textwidth}
\centering
\includegraphics[width=\linewidth]{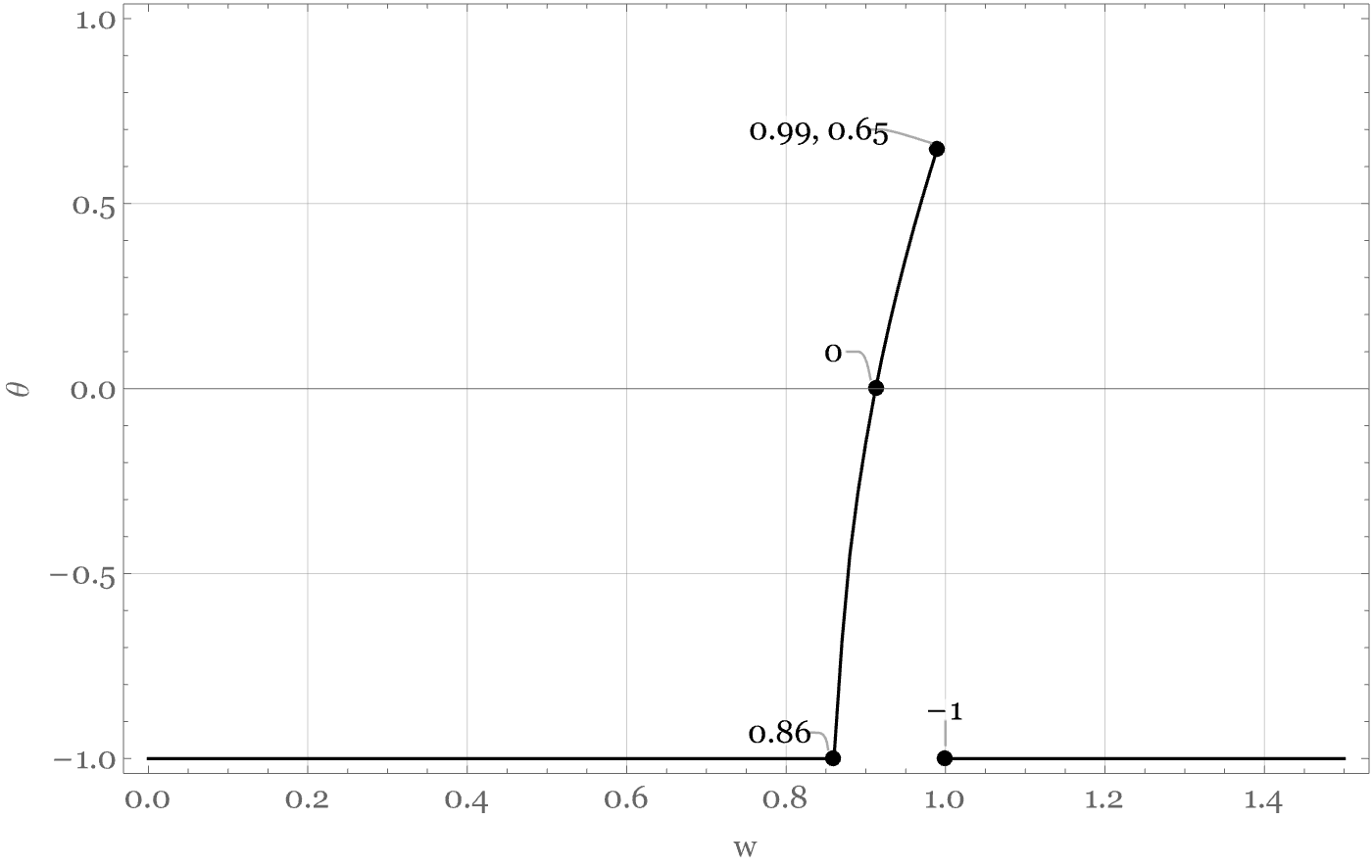}
(b) Optimal $\theta^*$
\end{minipage}}
\caption{The optimal scheme under FGM copulas family.}
\label{fig:2}
\end{figure}

Figure \ref{fig:2}(a) and Figure \ref{optimalRewards} reveal a similar pattern. When \( w < 1\), the principal emphasizes sufficient performance targeting by choosing large values for \( x \) and \( y \) and a small value for \( z \). Conversely, when \( w > 1\), the focus shifts to sustained performance targeting, with \( x \) and \( y \) relatively small and \( z \) large.

In line with the i.i.d.~case, for sufficiently small \( w < 1 \), the principal provides no reward for the second performance (\( z = 0 \)), corresponding to a purely sufficient rewarding rule. As \( w \) increases but remains under 1, the principal assigns a positive \( z \) to avoid completely neglecting the second performance, resulting in a partially sufficient rewarding rule, with \( z \) rising as \( w \) grows. A parallel logic applies when \( w > 1 \). If \( w \) is sufficiently large, the first performance receives no reward (\( x = y = 0 \)), so the optimal rewarding rule is purely sustained. For \( w \) just above 1, the principal finds it beneficial to set \( y > 0 \) for agents who skip the first period but perform in the second, producing a partially sustained rewarding rule.

Beyond choosing the rewarding rule, the principal can also vary the cost dependence parameter \(\theta\). Our analysis shows that for both relatively low budgets (\( w < 0.86\)) and relatively high budgets (\( w > 1\)), setting \(\theta = -1\) is optimal. In the intermediate range \((0.86, 1)\), a moderate level of \(\theta\) maximizes performance. To build intuition for this result, we first fix the rewarding rule to be either purely sufficient or purely sustained, and examine how \(\theta\) affects the agent’s likelihood of performing.

\begin{proposition}\label{fgminedoures}
Suppose \(G\) is defined by FGM copulas with parameter \(\theta\).  

\noindent(\RNum{1}) Under the purely sufficient rewarding rule \((x = w,\, y = w,\, z = 0)\), the overall performance decreases in \(\theta\).  

\noindent(\RNum{2}) Under the purely sustained rewarding rule \((x = 0,\, y = 0,\, z = w)\):  

   (a) the overall performance increases in \(\theta\) for \(w < 1\);
   
   (b) the overall performance decreases in \(\theta\) for \(w > 1\).  
\end{proposition}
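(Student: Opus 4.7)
The plan is to derive, for each rewarding rule under consideration, the first-period cost threshold $\bar{a}(\theta)$ implied by the FGM conditional density $g(b\mid a) = 1 + \theta(1-2a)(1-2b)$, express the overall performance $P$ in closed form in terms of $\bar{a}(\theta)$ and the joint CDF $G_\theta$, and then use implicit differentiation to sign $dP/d\theta$.

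For Part (I), under $(w,w,0)$ every agent performs at most once, so inclusion--exclusion on $G_\theta$ yields
\[
P(\theta) = \Pr_\theta\bigl(A \leq \bar{a} \text{ or } B \leq w\bigr) = 1 - (1-\bar{a})(1-w)(1 + \theta\bar{a} w).
\]
I would split at $w=1$. If $w > 1$, the agent who skips period~1 performs with probability one in period~2, and the indifference condition collapses to $\bar{a} \equiv 1/2$, hence $P \equiv 1$. If $w \leq 1$, computing $E[(w-B)^+ \mid A=a] = \tfrac{w^2}{2} + \theta(1-2a)C$ with $C = \tfrac{w^2}{2} - \tfrac{w^3}{3} > 0$ turns the indifference condition into $(1-2\bar{a})(1-2\theta C) = (1-w)^2$, so $\bar{a} \leq 1/2$ and implicit differentiation gives $\bar{a}'(\theta) = -C(1-2\bar{a})/(1-2\theta C) \leq 0$. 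Differentiating $P$ and regrouping then yields
\[
\frac{dP}{d\theta} = (1-w)\bigl[\bar{a}'\bigl(1 - \theta w(1-2\bar{a})\bigr) - w\bar{a}(1-\bar{a})\bigr],
\]
which is $\leq 0$ because $\bar{a}'\leq 0$, $w\bar{a}(1-\bar{a}) \geq 0$, and $1 - \theta w(1-2\bar{a}) \geq 0$ (each of $|\theta|$, $w$, $1-2\bar{a}$ is bounded by $1$).

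For Part (II), under $(0,0,w)$ the period-2 reward $z=w$ is active only after a period-1 performance. For $w < 1$, the same conditional calculation gives $\bar{a}(1+2\theta C) = \tfrac{w^2}{2} + \theta C$, whence $1-2\bar{a} = (1-w^2)/(1+2\theta C) > 0$ and $\bar{a}' > 0$. Writing $P = \bar{a} + G_\theta(\bar{a}, w) = \bar{a}(1+w) + \theta w(1-w)\bar{a}(1-\bar{a})$, both the direct $\theta$-derivative $w(1-w)\bar{a}(1-\bar{a})$ and the $\bar{a}'$-contribution $\bar{a}'\bigl[(1+w) + \theta w(1-w)(1-2\bar{a})\bigr]$ are non-negative (the coefficient in the bracket is at least $1+w - w(1-w)/4 > 0$), so $dP/d\theta \geq 0$. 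For $w > 1$ the prize $z=w$ exceeds every feasible cost, so any agent who performs in period~1 performs again in period~2 and $P = 2\bar{a}$. Using $E[w-B\mid A=a] = w - \tfrac{1}{2} + \theta(1-2a)/6$ gives the indifference condition $\bar{a}(1+\theta/3) = w - \tfrac{1}{2} + \theta/6$, so $\bar{a} > 1/2$ iff $w > 1$, and implicit differentiation yields $\bar{a}' = (1-2\bar{a})/(2(3+\theta)) < 0$; hence $dP/d\theta = 2\bar{a}' < 0$.

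The main obstacle is Part (I): the direct $\theta$-effect through the FGM dependence in $G_\theta$ and the indirect effect through the shifted threshold $\bar{a}(\theta)$ can a priori point in opposite directions. The identity $(1-2\bar{a})(1-2\theta C) = (1-w)^2$ extracted from the indifference condition is what makes them cooperate, and the main work is to use it to bound the induced terms cleanly. Once this is done, Parts (II)(a) and (II)(b) reduce to routine sign checks once the corresponding threshold equations are written down.
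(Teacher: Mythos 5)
Your proposal is correct and follows essentially the same route as the paper's proof: compute the FGM conditional payoff to obtain the period-1 threshold, show how the threshold moves with $\theta$, and combine this indirect effect with the direct effect of $\theta$ on the conditional performance probability. Your version is somewhat more explicit (closed-form threshold identities such as $(1-2\bar a)(1-2\theta C)=(1-w)^2$ and a single signed expression for $dP/d\theta$, versus the paper's two separate monotonicity claims), but the decomposition and all signs agree with the paper's argument, including the observation that performance is constant for $w>1$ in part (I).
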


\begin{proof}
    See Appendix \ref{pfgminedoures}
\end{proof}

Under the purely sufficient rewarding rule \(\,(x = w,\, y = w,\, z = 0)\), only the agent’s first performance is rewarded. In this setting, a negative correlation is advantageous for two reasons. First, it removes the low-cost agent’s expectation of having a second chance to perform, thereby eliminating any incentive to delay. Second, a negative correlation increases the likelihood that the agent’s cost in at least one period falls below \( w \), which in turn raises the probability of securing at least one performance.

Under the  purely sustained rewarding rule \(\,(x = 0,\, y = 0,\, z = w)\), only the second performance is rewarded. When \( w < 1\), the total reward is small, so only an agent with a sufficiently low cost in period 1 will perform. In this case, a \emph{positive} correlation helps by pairing low costs with other low costs, thereby increasing the propensity to perform across both periods. Conversely, when \( w > 1\), agents with higher costs in period 1 may also participate. As correlation becomes more positive, these higher-cost agents are more likely to face high costs again, making them less inclined to perform twice. A negative correlation, however, pairs these higher costs with lower costs, reducing the total cost of performing twice and thus encouraging more participation.

Figure~\ref{fig:2} illustrates that for very small \(w\), the principal’s optimal scheme mirrors the purely sufficient rule, while for large \(w\), it resembles the purely sustained rule. By Proposition~\ref{fgminedoures}, negative correlation (\(\theta = -1\)) is therefore preferable at both extremes. In the intermediate range \(0.86 < w < 1\), however, the principal blends the two reward structures. In this regime, negative correlation benefits the “sufficient” component, whereas positive correlation benefits the “sustained” component. The competing effects of these two forces yield an intermediate \(\theta\) as optimal, with \(\theta = 0\) being nearly optimal (see Figure~\ref{fig: optimal association parameter}).

\begin{figure}[ht]
\includegraphics[scale=0.6]{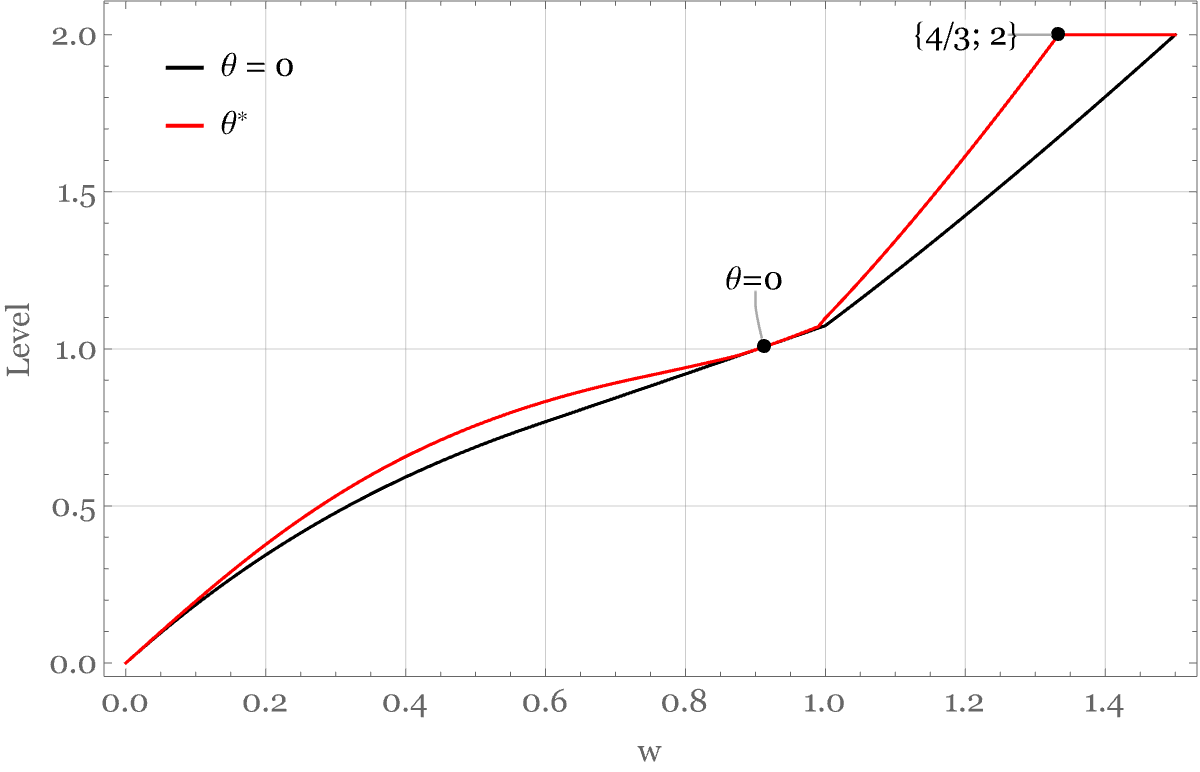}
\centering
\caption{Performance level under the optimal $\theta^*$ and under $\theta=0$.}
\label{fig: optimal association parameter}
\end{figure}


\section{Relaxing Monotonic Cost Correlations.}\label{optsection}

In the previous section, we examined a specific family of cost correlations that vary monotonically between the two periods. We now relax this assumption and allow the principal to select any joint cost distribution \(G\), subject only to the requirement that its marginal distributions on each period’s cost are uniform over \([0,1]\). This unrestricted case is mainly of theoretical interest, as it gives the principal an amount of flexibility over the dependence structure that is unlikely to be available in most practical applications.
Formally,
$\calG$ is the set of all cost distributions whose two marginals are $U[0,1]$.

Recall that a \emph{scheme} is defined as a pairing of the rewarding rule \((x, y, z)\) with a joint cost distribution \(G\). We show that under the optimal scheme, the rewarding rule must be either purely sufficient \(\bigl(x = w,\, y = w,\, z = 0\bigr)\) or purely sustained \(\bigl(x = 0,\, y = 0,\, z = w\bigr)\). The corresponding cost distribution \(G\) can exhibit a rather intricate structure.

We now introduce two families of schemes.
The first, termed a \emph{purely sufficient scheme}, seeks to induce as many agents as possible to perform exactly once, while the second, called a \emph{purely sustained scheme}, aims to have as many agents as possible perform twice.
We will show that any optimal scheme must belong to one of these two families. Let \(B_c\) denote the random variable \(B\) conditioned on \(A=c\).\footnote{We here abuse notations for clarity. Formally, $B_c$ is a random variable whose distribution coincides with (a version of) the conditional distribution of $B$ given $A=c$.}

\begin{definition}\label{onedoubledef1}
A scheme $\big(G,(x,y,z)\big)$ is called a \emph{purely sufficient scheme}
if $(x=w, y=w, z=0)$ and the joint distribution $G$ satisfies $P(B_c \geq  w) = 1$ for 
almost
every $c \in [0, w]$.
\end{definition}

Under a purely sufficient scheme, the principal allocates the full reward to the first performance. The corresponding cost distribution requires that the agent with types below $w$ in period 1 has a type above $w$ in period 2. This assumption further implies that the agent with a type below $w$ in period 2 had a type above $w$ in the previous period. In each period, an agent with a cost at most $w$ performs, and the overall performance level attains the upper-bound $2w$. 

Note that a purely sufficient scheme is feasible only when $w\leq \frac{1}{2}$. The following example illustrates such a construction for \(w = 0.4\).

\begin{example}\label{w04negex}
Let \(w = 0.4\). We partition the interval \([0,1]\) into two subintervals: \(I_1 = [0,0.4]\) and \(I_2 = (0.4,1]\). If an agent’s period 1 cost \(c\) lies in \(I_1\), then in period 2 his cost is drawn uniformly from \(I_2\), i.e., \ \(B_c \sim U(I_2)\). If \(c\) lies in \(I_2\), then with probability \(\tfrac{2}{3}\) his period 2 cost is drawn uniformly from \(I_1\), and with probability \(\tfrac{1}{3}\) it is drawn from \(I_2\). Figure~\ref{dengexample1} 
displays
the resulting density function of \(G\). 

\begin{figure}[ht]
\includegraphics[scale=0.8]{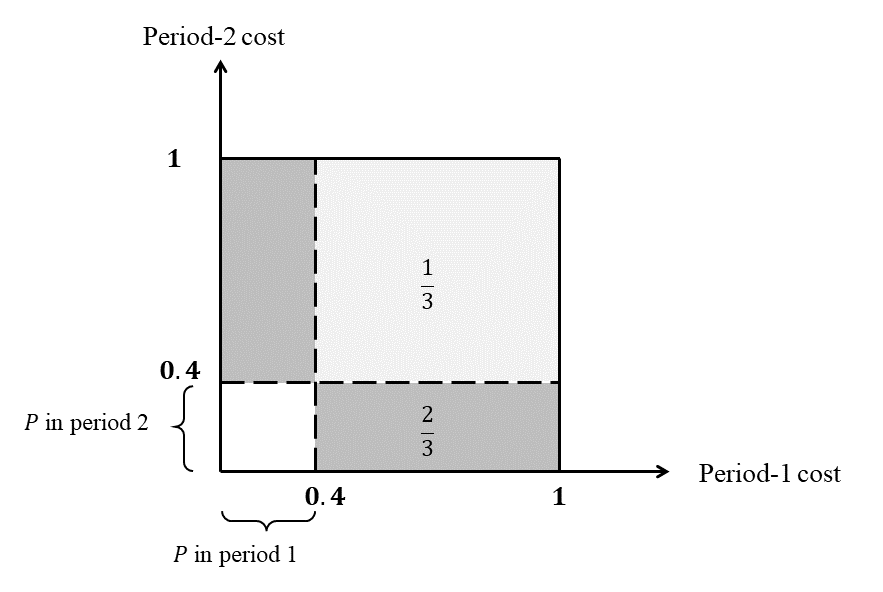}
\centering
\caption{The density function of $G$ in Example \ref{w04negex}.}\label{dengexample1}
\end{figure}
\end{example}


We now define a second family of schemes that focus on inducing two performances.

\begin{definition}\label{onedoubledef2}
A scheme $\big(G,(x,y,z)\big)$ is called a \emph{purely sustained scheme}
if $(x=0, y=0, z=w)$ and the joint distribution $G$ satisfies $B_c=w-c$ for every $c\in [0, w]$.
\end{definition}

Under purely sustained schemes, no reward is given for a single performance; instead, the principal awards the full prize only if the agent performs twice. The specified cost distribution ensures that an agent with period 1 cost \(c < w\) must have period 2 cost \(B_c = w - c\). Consequently, such an agent knows that the total cost of performing in both periods is exactly \(w\). Hence, under the rewarding rule \((x = 0,\, y = 0,\, z = w)\), all types below \(w\) perform twice, and all types above \(w\) do not perform at all, yielding a total performance level of \(2w\).

Unlike a purely sufficient scheme, a purely sustained scheme is feasible for any \(w > 0\). The next example illustrates one such construction for \(w = 0.9\). Note that for an agent whose period 1 cost exceeds \(w\), the period 2 cost may be assigned arbitrarily (provided that the marginal distribution remains uniform), as the agent will not perform in either period.

\begin{example}\label{w09negex}
Let \(w = 0.9\). Partition the interval \([0,1]\) into \(I_1 = [0,\,0.9]\) and \(I_2 = (0.9,\,1]\). If \(c \in I_1\), then \(B_c = 0.9 - c\). If \(c \in I_2\), then \(B_c \sim U(I_2)\). Figure~\ref{dengexample3}  
displays
the density function of \(G\).

\begin{figure}[ht]
\includegraphics[scale=0.8]{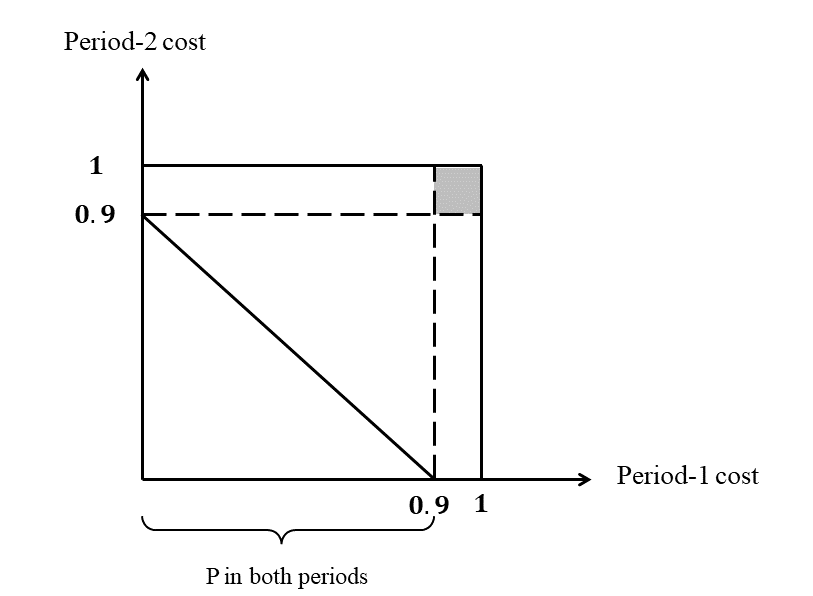}
\centering
\caption{The density function of $G$ in Example \ref{w09negex}.}\label{dengexample3}
\end{figure}
\end{example}

We are now ready to characterize the optimal scheme. For exposition reasons we focus on $w\leq 1$. As we will show later, for $w>1$, full performance in both periods can be attained, and the optimal scheme can take various forms.

\begin{proposition}\label{opuni}
For \( w \in \bigl[0, \tfrac{1}{2}\bigr] \), a scheme is optimal if and only if it is either a purely sufficient scheme or a purely sustained scheme.  
For \( w \in \bigl(\tfrac{1}{2}, 1\bigr] \), a scheme is optimal if and only if it is a purely sustained scheme.
\end{proposition}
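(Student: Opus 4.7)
The plan is to show that any optimal scheme attains the upper bound $2w$ from Lemma~\ref{upperboundle}. For the easy ``if'' direction, direct verification shows that both the purely sufficient and purely sustained schemes achieve performance $2w$ and are therefore optimal whenever feasible. For the ``only if'' direction, since each period's cost marginal is $U[0,1]$, period-1 performance equal to $w$ forces almost every type with $c\le w$ to perform in period 1, and period-2 performance equal to $w$ forces, using the pointwise bounds $P(B_c\le z)\le P(B_c\le w)$ and $P(B_c\le y)\le P(B_c\le w)$, that $B_c\notin(z,w]$ a.s.\ for a.e.\ $c\le w$ and $B_c\notin(y,w]$ a.s.\ for a.e.\ $c>w$.

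I would then invoke the period-1 IC
\[
x-c+\mathbb{E}[\max(z-B_c,0)]\;\ge\;\mathbb{E}[\max(y-B_c,0)],
\]
with $x+z=w$ from Lemma~\ref{nowastetwo}, evaluated at $c=w$. A pointwise bound on $\max(z-b,0)-\max(y-b,0)$ (at most $z-y$ when $z\ge y$, at most $0$ otherwise) forces $y=0$ or $z=0$. Combining this with the period-2 bound $\int_0^w P(B_c\le z)\,dc+\int_w^1 P(B_c\le y)\,dc=w\le z+\min(y,1-w)$, I would conclude that either $(y,z)=(0,w)$ (the purely sustained reward rule) or $(y,z)=(w,0)$ together with $w\le\tfrac{1}{2}$ (the purely sufficient reward rule); all other combinations, in particular $(x,y,z)=(w,0,0)$ and intermediate rules, are excluded.

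For the purely sufficient rule $(w,w,0)$, the constraint $B_c\notin(0,w]$ a.s.\ for $c\le w$ is precisely $P(B_c\ge w)=1$ for a.e.\ $c\in[0,w]$, matching Definition~\ref{onedoubledef1}. For the purely sustained rule $(0,0,w)$, the constraint $B_c\le w$ a.s.\ on $\{A\le w\}$ reduces the period-1 IC to $\mathbb{E}[B_c]\le w-c$. Integrating against the uniform marginal of $B$ on $[0,w]$ gives $\int_0^w\mathbb{E}[B_c]\,dc=w^2/2=\int_0^w(w-c)\,dc$, which together with the pointwise inequality forces $\mathbb{E}[B_c]=w-c$ for a.e.\ $c\in[0,w]$.

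The main obstacle is upgrading this conditional-mean identity to the a.s.\ identity $B_c=w-c$ required by Definition~\ref{onedoubledef2}. I would resolve this with a variance argument conditional on $\{A\le w\}$: since $\{A\le w\}=\{B\le w\}$ a.s., both $A$ and $B$ have conditional marginal $U[0,w]$, so $\mathbb{E}[A^2\mid A\le w]=\mathbb{E}[B^2\mid A\le w]=w^2/3$ and $\mathbb{E}[AB\mid A\le w]=\mathbb{E}[A(w-A)\mid A\le w]=w^2/6$. These yield $\mathbb{E}[(A+B)^2\mid A\le w]=w^2=(\mathbb{E}[A+B\mid A\le w])^2$, hence $\Var(A+B\mid A\le w)=0$, so $A+B=w$ a.s.\ on $\{A\le w\}$. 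Combining the two surviving reward rules with the feasibility threshold $w\le\tfrac{1}{2}$ for the purely sufficient case completes the classification.
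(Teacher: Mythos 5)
Your proposal is correct and follows the same overall architecture as the paper's proof: reduce optimality to attaining the bound $2w$, deduce that almost every type below $w$ must perform in each period, use the period-1 incentive constraint for types near $w$ to eliminate every reward rule other than $(0,0,w)$ and $(w,w,0)$ (the latter only for $w\le\tfrac12$), and then pin down the cost structure by combining the conditional-mean identity $\mathbb{E}[B_c]=w-c$ with a second-moment computation. Two sub-steps are executed differently, and arguably more cleanly, than in the paper. First, where the paper establishes $y=0$ in the case $z=w$ via a limiting argument ($\lim_{\widehat c\to w}G_{\widehat c}(w)=1$ plus integration by parts) and disposes of $z\in(0,w)$ by a separate case analysis, your single pointwise bound $\max(z-b,0)-\max(y-b,0)\le\max(z-y,0)$, fed into the IC as $c\uparrow w$ together with the marginal-uniform accounting $w\le z+\min(y,1-w)$, handles both at once and also delivers the feasibility threshold $w\le\tfrac12$ for the purely sufficient rule. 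Second, where the paper computes $\Cov(A,B\mid A\le w)=-\Var(A\mid A\le w)$ and stops at $\rho=-1$ (leaving implicit the last inference from perfect negative correlation to $B=w-A$), your computation $\Var(A+B\mid A\le w)=0$ yields $A+B=w$ a.s.\ directly; the two are the same moment calculation, but yours closes the argument without the extra step. One small imprecision to note: for the purely sufficient rule the period-2 accounting only gives $B_c\notin(0,w]$ a.s., which still permits an atom at $B_c=0$; this is ruled out because $\int_0^1 P(B_c=0)\,\rmd c=P(B=0)=0$, so the stated conclusion $P(B_c\ge w)=1$ for a.e.\ $c\in[0,w]$ does follow, but the word ``precisely'' deserves that one extra line.
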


\begin{proof}
See Appendix \ref{popuni}.
\end{proof}

This result aligns with Proposition \ref{fgminedoures}. Under the rewarding rule \(\,(x = w,\, y = w,\, z = 0)\) of a purely sufficient scheme, the performance level decreases monotonically with the cost dependence level. Hence, a negative correlation, where types below \( w \) swap with types above \( w \) across periods, maximizes performance. Conversely, under the rewarding rule \(\,(x = 0,\, y = 0,\, z = w)\) of a purely sustained scheme, both positive and negative correlations can help. Positive correlation is advantageous because a low type in period 1 is more likely to be low in period 2, making the agent more inclined to perform in both periods. Negative correlation is also beneficial because it balances the agent’s total cost of performing twice, thereby using the reward more efficiently.

The construction of a purely sustained scheme in Definition \ref{onedoubledef2} optimally combines these two effects. On the one hand, if an agent has a type below \(w\) in period 1, then he also has a type below \(w\) in period 2 (mimicking positive correlation). Furthermore, conditional on having a type below \(w\), a relatively lower (resp.\ higher) cost \(c\) in period 1 is paired with a relatively higher (resp.\ lower) cost \(w-c\) in period 2 (mimicking negative correlation). This finely tuned dependence makes the purely sustained scheme optimal for every budget level \(w \in [0,1]\). At the same time, this construction relies on a highly specific joint structure of costs that is unlikely to be observed in practice. We therefore view the purely sustained scheme mainly as a theoretical benchmark rather than a practical guideline.

Because both purely sufficient and purely sustained schemes can achieve the upper bound of \(2w\) for \( w \le \tfrac{1}{2} \), they are both optimal. We now sketch why no other scheme can achieve this upper bound.

We first argue that 
under an optimal scheme, either $z=w$ or $z=0$.
Indeed, if
$z\in(0, w)$, 
it is impossible to induce all types below $w$ to perform in both periods. Specifically:
    \begin{itemize}
    \item If $y<w$, then types $c\in\big(\max(y, z), w\big)$  do not perform in period 2; 
\item If $y=w$, then types $c\in(x, w)$  do not perform in period 1. \black 
\end{itemize}

\underline{Case 1:} $z=w$. We next show that to ensure all types below \( w \) perform in both periods, the scheme must be a purely sustained scheme (Definition \ref{onedoubledef2}). We first argue that 
in an optimal scheme $y$ must be 0.
Indeed, if $y > 0$,
then an agent with type $c$ close to $w$ (from below) does not perform in period 1:
\begin{itemize}
    \item If $B_c$ is high, then the agent does not perform in both periods.
    \item If $B_c$ is low, then the agent delays his performance to period 2.
\end{itemize}

Therefore, the optimal rewarding rule must be \( (x = 0, y = 0, z = w) \). Under this rule, to ensure the agent performs in both periods, the cost correlation must take the form \( B_c = w - c \) for \( c \in [0, w] \). 

To illustrate the intuition behind this, we consider the case where agent types are drawn from a discrete set (rather than the continuous interval \( [0,1] \)). While the case with continuous types is more technically complex, the underlying logic remains the same.

Take, for instance, the simple case where the type of the agent can take the values $\{0, 0.1, 0.2, \dots, 0.9, 1\}$. \black
Suppose, e.g., that $w=1$,
so that \black the rewarding rule is $(x=0, y=0, z=1)$, that is, an agent gets a reward 1 if and only if he performs twice. Which cost correlation can guarantee that all types perform in both periods? 
Clearly, an agent with type $c=1$ in period 1 performs if and only if he knows that his period 2 type is 0 with probability 1. 
Therefore, an agent with type $c=0.9$ in period 1 performs if and only if he knows that with certainty his period 2 type is $0.1$, etc. 

\underline{Case 2:} $z=0$. In this case, no reward goes to the second performance, so total performance is bounded above by \(\min(1,\, 2w)\).
Therefore, $z=0$ can be optimal only when $w\leq \frac{1}{2}$. 
In this case, at each stage all types with cost lower than $w$ must perform,
and each type performs at most once (since $z=0$).
Hence we must have $x=y=w$, and
the optimal cost correlation adopts the form shown in Definition \ref{onedoubledef1}.

\begin{remark}
For \( w = 1 \), a purely sustained scheme achieves full performance, making it optimal. This scheme remains optimal for all \( w > 1 \). However, when \( w > 1 \), it is possible to set \( z \) to be greater than 1, and additional cost structures may also become part of the optimal rewarding rules.
\end{remark}

\section{Conclusion}
\label{sec: conclusion}

In this paper, we examined the optimal combination of cost dependencies and reward schemes in a two-period principal-agent model subject to a budget constraint. When the principal selects a cost structure from a predefined, monotonically ranked family of cost dependencies, the total budget plays a central role. A low budget makes it optimal to focus on \emph{sufficient performance targeting}, that is, to concentrate rewards on inducing a single performance. By contrast, a high budget calls for \emph{sustained performance targeting}, that is, a back-loaded scheme that aims to induce two performances over time.

We also showed that negative correlation between the agent’s costs can be valuable for both low and high budgets. When the budget is low, a negative correlation removes the low-cost agent’s expectation of a second, potentially cheaper opportunity, thus preventing any incentive to delay performance. It also increases the chance that at least one period’s cost will be low enough to justify performing. When the budget is high, a negative correlation helps balance the agent’s total cost of performing twice, raising the likelihood of performing in both periods and improving the efficiency of reward allocation.

These insights are relevant for real-world environments in which principals operate under budget constraints, such as research funding agencies. In such settings, a principal who selects recipients whose cost structures are well aligned with the available resources and tailors the reward scheme accordingly can better motivate agents and enhance overall performance. 

Beyond monotonically ranked cost dependencies, we considered a more general setting in which the principal can select any joint cost distribution with uniform marginals. We provided a complete characterization of all optimal schemes. The qualitative result is consistent with the independent and copula cases: a low budget admits an optimal scheme that concentrates rewards on inducing a single performance, whereas a high budget calls for a purely sustained scheme that pays only after two performances. For low budgets, optimal schemes are not unique, but the main implication remains.

Our analysis has focused on a two-period setting. A natural question is whether the same insights extend to longer horizons. In a three-period version of the model in which the principal can choose both the reward scheme and the joint cost distribution, we find a similar pattern: under a very low budget, there is an optimal scheme that concentrates all rewards on inducing a single performance; as the budget increases, there is an optimal scheme that targets performance in two periods; and once the budget is sufficiently large, there is an optimal scheme that induces performance in all three periods. Characterizing the optimal reward scheme for intermediate budgets and for horizons longer than three periods remains an open question for future research.

\bibliographystyle{jpub2}
\bibliography{literature}

\newpage
\appendix
\section{Online Appendix}

\subsection{Proof of Lemma \ref{nowastetwo}}\label{pnowastetwo}

Suppose, to the contrary, 
that
under an optimal rewarding rule $x+z<w$. Suppose the principal increases $z$ while keeping the value of $x$ and $y$ unchanged. 
The overall performance level given in (\ref{uniformula}) can be rewritten as 
\[
F(\overline{c})\cdot\big(1+F(z)\big)+\big(1-F(\overline{c}) \big)\cdot F(y)
= F(\overline c)(1+F(z)-F(y)) + F(y). 
\]
By 
Eq.~\eqref{eq: payoff for plaintiff},
$\overline c$  
increases in $z$,
and,
since $1-F(y) \geq 0$, the overall performance increases in $z$.

\subsection{Proof of Proposition \ref{uniformop}}\label{puniformop}

Some parts of the proof rely on the assumption that $x+z=w$,
but this is only w.l.o.g.
\color{black}

\noindent\underline{Case 1}: $w\leq 1$.

Since both \(y\) and \(z\) are bounded by \(w\) (and hence are no greater than 1), the period 1 cost threshold simplifies to
\begin{equation}
\label{equ:bar-c}
    \overline{c} = \overline{c}(x,y,z)=\frac{1}{2} \left(2 x-y^2+z^2\right).
\end{equation}

\underline{Step 1}: Under the optimal rewarding rule, either $y=w$ or $z=w$.

By Eq.~\eqref{equ:bar-c}, $\partial \overline{c}/\partial z=-\big(1-z \big)$, and $\partial \overline{c}/\partial y=-y$. Let ${\rm obj}(x,y,z)$ denote the overall performance level shown in (\ref{uniformula}). The partial derivative of the principal's objective function with respect to $y$ and $z$ are
\begin{equation}\label{partialobjy}
\frac{\partial \rm{obj}}{\partial y}(x,y,z)=1-\overline{c}-y\cdot \big(1+z-y\big),
\end{equation}
\begin{equation}\label{partialobjz}
\frac{\partial \rm{obj}}{\partial z}(x,y,z)=\overline{c}-\big(1-z\big)\cdot \big(1+z-y\big).
\end{equation}
It then follows that
\begin{equation}\label{partialsum}
\frac{\partial \rm{obj}}{\partial y}(x,y,z)+ \frac{\partial \rm{obj}}{\partial z}(x,y,z)=\big(z-y\big)^2.
\end{equation}
This implies that as long as $y\neq z$, simultaneously increasing $y$ and $z$ 
by the same amount
improves overall performance level. Therefore, the optimal rewarding rule must be a corner solution (with either $y=w$ or $z=w$), or an interior solution with $y=z$.

Suppose 
that 
under the optimal rewarding rule $y=z$. In this case $\overline{c}=x$ and $\textrm{obj}(x,y,z)=x+z=w$, regardless of the exact value of $(x, y, z)$. In particular, $(x=w, y=0, z=0)$ is one combination that attains the performance level of $w$. Suppose the principal increases $y$ while keeping $x=w$ and $z=0$. By Eq.~(\ref{partialobjy}), the overall performance level increases. A contradiction to the assumption that under the optimal rewarding rule $y=z$. Therefore, the optimal rewarding rule must be a corner solution with either $y=w$ or $z=w$.

\underline{Step 2}: The optimal rewarding rule.

By (\ref{uniformula}), (\ref{equ:bar-c}), and 
since
$x=w-z$, we can write the overall performance level as a function of $y$ and $z$. 
Supposing that
$y=w$ (resp., $z=w$), we can calculate the value of $z$ (resp., $y$) that maximizes the overall performance level
(\ref{uniformula}).
A simple calculation shows that the optimal solution is 
$\bigl(y=w, z=\max\big(0, g(w)\big)\bigr)$ 
(resp., $\bigl(y=\max\big(0, h(w)\big), z=w\bigr)$).
Comparing 
these
two values, we find that the former is optimal when $w\leq 1$.\\


\noindent
\underline{Case 2: \(w > 1\).}

When \(w\) exceeds 1, 
\(y\) or \(z\) (or both) may exceed 1, so the cost threshold \(\overline{c}\) need not follow the form in \eqref{equ:bar-c}.

\underline{Step 1}: 
In the optimal rewarding rule, $y \leq 1$.

Intuitively, if $y > 1$ then one can lower $y$ to 1.
This change does not affect an agent who did not perform in period 1, because that agent will perform in period 2, whether $y=1$ or $y>1$.
And this change makes not performing in period 1 less desirable, so it cannot cause an agent who performs in period 1 to not perform in period 2.
We here show that lowering $y$ in fact strictly improves overall performance.
\color{black}

If \(y>1\), then
\[
\overline{c} 
= x + \tfrac{1}{2} 
+ \int_{0}^{\min(z,1)} (z - c)\,\mathrm{d}c 
- y,
\]
which is strictly decreasing in \(y\). From \eqref{uniformula}, the overall performance level is
\[
1 + F(\overline{c}) \cdot F(z),
\]
and this is strictly increasing in \(\overline{c}\). Thus, decreasing \(y\) increases \(\overline{c}\) and in turn improves performance. Consequently, no optimal solution has \(y>1\).

\underline{Step 2}: The case \(z\geq1\) 

We argue that when $z \geq 1$, 
the rewarding rule $(x,y,z)$ yields the same performance as the rewarding rule $(\widehat x,y,\widehat z)$,
provided $\widehat z \in [1,w]$ and $\widehat x + \widehat z = w$.
Indeed, since $z,\widehat z \geq 1$, under both rewarding rules an agent who performed in period 1 will perform in period 2,
and hence, since $x+z=\widehat x+\widehat z$, under both the incentive to perform in period 1 is the same.

\underline{Step 3}: 
Solving the case 
 \(y \le 1\) and \(z \le 1\).
 
By an argument similar to that in Step~1 of Case~1, one can show that under this restriction, the optimal values of \(y\) and \(z\) 
are
$y=1$ or $z=1$.
A simple calculation then shows that 
the optimal rewarding rule is $\big(x=w-1, y=\max\big(0, h(w)\big), z=1\big)$. 
The result follows by Step~2.

\subsection{Proof of Proposition \ref{fgminedoures}}\label{pfgminedoures}

\noindent\emph{Proof of part (\RNum{1})}. Suppose the rewarding rule is $(x=w, y=w, z=0)$.
For an agent with type \(A=c\) who performs in period 1, the payoff is \(u_p:=w-c\), independent of \(\theta\). 
Let \(B_c\) denote the random variable \(B\) conditioned on \(A=c\). If the agent does not perform in period 1, 
his
expected payoff is $u_{np}:=\int_0^w (w-k)g_c(k)\, \rmd k$, where \(g_c\) denote the density function for the distribution of the agent’s period 2 cost, given that the agent’s period 1 cost \(c\), and it depends on $\theta$. 
Note that
\begin{align*}
u_{np}
&=
\int_0^1 (w-k)g_c(k)\, \rmd k - \int_w^1 (w-k)g_c(k)\, \rmd k\\
&= w - E[B_c] + \int_w^1 (k-w)g_c(k)\, \rmd k\\
&>w-E[B_c].
\end{align*}


Under FGM copulas, $E[B_c]=\frac{1}{2}+\frac{(2c-1)\cdot \theta}{6}$. When $c>\frac{1}{2}$ (resp.~$c<\frac{1}{2}$), $E[B_c]$ is smaller (resp.~larger) than $c$, and it gets closer to $c$ when $\theta$ increases. As a result, when $c>\frac{1}{2}$, $E[B_c]<c$ and we have $u_{np}>w-E[B_c]>w-c=n_p$, and hence the agent 
does not perform
in period 1.

We next consider $c<\frac{1}{2}$. In this case, the agent 
performs
in period 1 if and only if
\begin{equation}\label{compc12}
   \underbrace{w-c}_{u_p}>\underbrace{\int_0^{\min\{w, 1\}} (w-k)g_c(k)\, \rmd k}_{u_{np}}. 
\end{equation}
The LHS of \eqref{compc12} is independent of $\theta$. The RHS is 
\begin{equation}\label{fgmone1}
u_{np}= \left\{
  \begin{array}{l l}
   \frac{2w^2}{3}\cdot \left(\frac{3}{4}+(\frac{1}{2}-c)(\frac{3}{2}-w)\theta\right), & \quad \text{if $w<1$,}\\
   {\frac { \left(\frac{1}{2}-c \right) \theta}{3}}+w-{\frac{1}{2}}, & \quad \text{if $w\geq 1$}.
  \end{array} \right.
\end{equation}
Since $c<\frac{1}{2}$ and $w<\frac{3}{2}$, $u_{np}$ is increasing in $\theta$. As a result, condition (\ref{fgmone1}) is less likely to hold when $\theta$ increases. This implies that the performance threshold in period 1 is decreasing in $\theta$.

Denote by $c^*$ the agent's performance threshold in period 1. The overall performance level is
\begin{equation}
c^*+\int_{c^*}^1 P(B\leq w|A=c)\, \rmd c.
\end{equation}

The part $\int_{c^*}^1 P(B\leq w|A=c)\, \rmd c$ evaluates the population who have a type above $c^*$ in period 1, and a type below $w$ in period 2. Fixing $c^*$ when $w<1$, this value is decreasing in $\theta$; when $w\geq 1$, this value is independent of $\theta$. Recall that $c^*$ is decreasing in $\theta$. As a result, the overall performance level is decreasing in $\theta$.\\

\noindent\emph{Proof of part (\RNum{2})}. Suppose the rewarding rule is $(x=0, y=0, z=w)$.
In period 1, an agent with type $c$ performs if and only if 
\begin{equation}\label{doublefgm0}
-c+\int_0^{\min\{w, 1\}} (w-k)g_c(k)\, \rmd k\geq 0.
\end{equation}
When $w<1$, this condition is equivalent to
\begin{equation}\label{doublefgm1}
c\leq 
\frac{1}{2}\cdot \frac{3w^2+3\theta w^2-2\theta w^3}{3+3\theta w^2-2\theta w^3}
=: c^*_1.
\end{equation}
The RHS of (\ref{doublefgm1}) is the threshold type of performance in period 1. 
Simple algebraic manipulations show 
\color{black}
that for $w<1$, $c^*_1$ is lower than $\frac{1}{2}$, and it is increasing in $\theta$.

The overall performance level is
\begin{equation}
c^*_1+\int_0^{c^*_1}P(B\leq w|A=c)\, \rmd c.
\end{equation}
It can be verified that for a fixed $c^*_1$, the value $\int_0^{c^*_1}P(B\leq w|A=c)\, \rmd c$ is increasing in $\theta$ for $w<1$. 
In addition, 
\color{black}
$c^*_1$ is increasing in $\theta$. 
Hence
the overall performance level is increasing in $\theta$.\\

When $w\geq 1$, condition \eqref{doublefgm0} is equivalent to 
\begin{equation}\label{doublefgm2}
    c\leq \frac{1}{2}\cdot \frac{\theta+3w+3(w-1)}{\theta+3}=:c_2^*.
\end{equation}
It can be verified that for $w>1$, 
$c^*_2$ is larger than $\frac{1}{2}$ and decreasing in $\theta$.

Since $P(B\leq 1|A=c) = 1$ for every $c$,
when $w \geq 1$,
\color{black}
the overall performance level is
\begin{equation}
c^*_2+\int_0^{c^*_2}P(B\leq w|A=c)\, \rmd c = 2c^*_2,
\end{equation}
hence
decreasing in $\theta$.

\subsection{Proof of Proposition \ref{opuni}}\label{popuni}

Since the purely sustained scheme attains the upper bound of $2w$,
under any optimal scheme,
in each period the agents with cost smaller than $w$ must perform.%
\footnote{ In fact, it might happen that a measure 0 of those agents do not perform. 
To simplify the proof, we ignore this possibility,
and assume that under the optimal scheme,
all agents whose cost is smaller than $w$ perform.}
\color{black}

We first argue that $z\in(0, w)$ cannot be part of the optimal scheme.
\begin{itemize}
\item If $z\in(0, w)$, then it is impossible to have all types below $w$ perform in both periods:
\begin{itemize}
\item If $y<w$, then types $c\in\big(\max(y, z), w\big)$  do not perform in period 2; 
\item If $y=w$, then types $c\in(x, w)$  do not perform in period 1. This is because performing only in period 1 is worse than 
not performing
\color{black}
in both periods; and performing in both periods is strictly worse than performing only in period 2.
\end{itemize}
\end{itemize}
Therefore, the optimal rewarding rule must have either $z=w$ or $z=0$.\\

\underline{Case 1:} $z=w$. 

We 
show that to have all types below $w$ perform in both periods, the scheme must be a purely sustained scheme shown in Definition \ref{onedoubledef2}. We first argue that when $z=w$, 
we must have $y=0$ in an optimal scheme,
so that in this case the optimal rewarding rule must adopt the form $(x=0, y=0, z=w)$.

Consider an agent whose period 1 cost is $\widehat{c}\in (0, w)$. To attain the upper-bound performance level $2w$, this agent must perform in period 1. Since $\widehat{c}>x=0$, a single performance in period 1 is not beneficial for the agent. 
Therefore, the agent necessarily performs in period 2 with a positive probability. 

The agent will perform in period 2 only if his cost in period 2 is at most $w$.
Hence, the fact 
that the agent's expected cost of performing in both periods is at most
his expected gain, translates to:
\color{black}
\begin{equation}\label{nodevuniq0}
\widehat{c}+\int_0^w c\cdot g_{\widehat{c}}(c) \, \rmd c\leq \int_0^w w\cdot g_{\widehat{c}}(c) \, \rmd c=w\cdot G_{\widehat{c}}(w), 
\end{equation}
where $g_{\widehat{c}}(c)$ is the agent's period 2 cost conditional on $A=\widehat{c}$. As $\widehat{c}$ goes to $w$, the LHS of (\ref{nodevuniq0}) is at least $w$, while the RHS of (\ref{nodevuniq0}) is at most $w$. Therefore,
\begin{equation}\label{wglim}
\lim_{\widehat{c}\to w}\int_0^w w\cdot g_{\widehat{c}}(c) \, \rmd c=w,
\end{equation}
and
\begin{equation}\label{gcclim}
\lim_{\widehat{c}\to w}G_{\widehat{c}}(w)=1.
\end{equation}

The agent's gain from two periods' performances is  
\begin{equation}
w\cdot G_{\widehat{c}}(w)-\widehat{c}-\int_0^w c\cdot g_{\widehat{c}}(c) \, \rmd c.
\end{equation}  
If, instead, the agent does not perform in period 1 and only performs in period 2, his expected payoff is  
\begin{equation}
\int_0^y(y-c)\cdot g_{\widehat{c}}(c)\, \rmd c.
\end{equation}  

To have the agent performs in period 1, it must be the case that  
\begin{equation}\label{nodevuniq1}
\begin{split}
0 \geq 
\color{black}
&\int_0^y(y-c)\cdot g_{\widehat{c}}(c)\, \rmd c-\left(w\cdot G_{\widehat{c}}(w)-\widehat{c}-\int_0^w c\cdot g_{\widehat{c}}(c) \, \rmd c\right)\\
=&\, y\cdot G_{\widehat{c}}(y)+\int_y^w c\cdot g_{\widehat{c}}(c)\, \rmd c-\left(w\cdot G_{\widehat{c}}(w)-\widehat{c}\right).
\end{split}
\end{equation}  
By (\ref{gcclim}), $\lim_{\widehat{c}\to w}\big(w\cdot G_{\widehat{c}}(w)-\widehat{c}\big)=0$. 
For (\ref{nodevuniq1}) to hold, the sum \( y\cdot G_{\widehat{c}}(y)+\int_y^w c\cdot g_{\widehat{c}}(c)\, \rmd c \), which is non-negative, must also go to zero as $\widehat{c}$ increases to $w$. 

By (\ref{wglim}) and Integration by Parts,  
\begin{equation}\label{nodevuniq2}
\begin{split}
\lim_{\widehat{c}\to w}\int_0^w c\cdot g_{\widehat{c}}(c) \, \rmd c&=\lim_{\widehat{c}\to w}\left(w\cdot G_{\widehat{c}}(w)-0-\int_0^wG_{\widehat{c}}(c)\, \rmd c\right)\\
&=\lim_{\widehat{c}\to w}\int_0^w\big(G_{\widehat{c}}(w)-G_{\widehat{c}}(c)\big)\, \rmd c\\
&=0.
\end{split}
\end{equation}  

It follows that
$\lim_{\widehat{c}\to w} \int_y^w c\cdot g_{\widehat{c}}(c)\, \rmd c \leq \lim_{\widehat{c}\to w} \int_0^w c\cdot g_{\widehat{c}}(c)\, \rmd c = 0$.
We conclude that $y=0$ or 
$\lim_{\widehat{c}\to w}G_{\widehat{c}}(y)=0$.
In the latter case, by (\ref{gcclim}),
\color{black}
%
%
\begin{align}
\nonumber
\lim_{\widehat{c}\to w}\int_0^w\big(G_{\widehat{c}}(w)-G_{\widehat{c}}(c)\big)\, \rmd c
=&\lim_{\widehat{c}\to w}\left(\int_0^y\big(G_{\widehat{c}}(w)-0\big)\, \rmd c+\int_y^w\big(G_{\widehat{c}}(w)-G_{\widehat{c}}(c)\big)\, \rmd c\right)\\
\label{nodevuniq3}
\geq &\,\lim_{\widehat{c}\to w} y\cdot G_{\widehat{c}}(w)\\
=&y.
\nonumber
\end{align}

By (\ref{nodevuniq2}) and (\ref{nodevuniq3}),
$y=0$.

\bigskip

We have thus
\color{black}
shown that the optimal rewarding rule adopts the form $(x=0, y=0, z=w)$. We next analyze the optimal cost correlation. Under the optimal scheme with the rewarding rule \((x=0, y=0, z=w)\), an agent with type \(c\leq w\) performs in period 1 with probability 1. Therefore, with probability 1, \(E[B_c]\leq w-c\).
Taking expectations on both sides, we obtain that
\[\frac{w}{2}=E[E[B_c]]\leq E[w-c]=\frac{w}{2},\]
which implies that 
for almost all $c\in [0,w]$,
\color{black}
\begin{equation}\label{ycwceq}
    E[B_c]=w-c.
\end{equation}
We next show that in fact, \(B_c=w-c\) for almost all $c\in [0,w]$.

For a simple illustration we focus on the case \(w=1\) below. The case with \(w\in (0, 1)\) follows a similar argument and 
is discussed at the end of this section.

By the assumption that the 
period 1 and the period 2
costs are both uniformly distributed, we have \(A\sim U[0, 1]\) and \(B\sim U[0, 1]\). As a result, \(E[A]=E[B]=\frac{1}{2}\). We 
will
prove that the correlation coefficient \(\rho(A, B)=-1\). 
Indeed,
\begin{align}
\Cov(A, B)
&= E[AB] - E[A]\cdot E[B] \label{covxy-line1}\\
&= E\bigl[E[AB \mid A]\bigr] - E[A]\cdot E[A] \label{covxy-line2}\\
&= E\bigl[A \cdot E[B \mid A]\bigr] - (E[A])^2 \label{covxy-line3}\\
&= E\bigl[A \cdot (1 - A)\bigr] - (E[A])^2 \label{covxy-line4}\\
&= E[A] - E[A^2] - (E[A])^2 \label{covxy-line5}\\
&= \tfrac{1}{2} - \tfrac{1}{3} - \tfrac{1}{4} 
   = -\tfrac{1}{12} \label{covxy-line6}\\
&= -\Var(A)
   = -\Var(B), \label{covxy-line7}
\end{align}
where
\color{black}
Eq.~\eqref{covxy-line2} holds by the law of iterated expectation and the fact that \(E[A]=E[B]\); \eqref{covxy-line3} holds because \(A\) is constant given \(A\). Eq.~\eqref{covxy-line4} follow from \eqref{ycwceq}. Eq.~\eqref{covxy-line6} and \eqref{covxy-line7} hold since \(A\) has the uniform distribution.
Eqs.~\eqref{covxy-line1}–\eqref{covxy-line7} imply that
\[
\rho(A, B)=\frac{\Cov(A, B)}{\sqrt{\Var(A)}\cdot \sqrt{\Var(B)}}=-1,
\]
as claimed.\\

We now adapt the argument 
to the case \(w<1\). We will show that conditional on \(A\leq w\), the correlation coefficient \(\rho(A, B)\bigl|_{A\leq w}\) is \(-1\). 

The analog of \eqref{ycwceq} is \(E[B \mid A]\bigl|_{A\leq w}=w-A\). Since
\begin{equation}
\begin{split}
E[B \mid A \le w]
&=\int_{a=0}^w E[B \mid A=a]\, \rmd a
=\int_{a=0}^w (w - a)\, \rmd a
=\frac{w}{2},
\end{split}
\end{equation}
\[
E[A \mid A \le w]
=\int_0^w a \cdot \frac{1}{w}\, \rmd a
=\frac{w}{2},
\]
and
\[
E[A^2 \mid A \le w]
=\int_0^w a^2 \cdot \frac{1}{w}\, \rmd a
=\frac{w^2}{3},
\]
we have
\begin{equation}\label{covxy2}
\begin{split}
\Cov(A, B)\bigl|_{A\leq w}
&= E[AB \mid A \le w]
   - E[A \mid A \le w]\cdot E[B \mid A \le w]\\
&= E\!\bigl[E[AB \mid A] \;\big|\; A \le w\bigr]
   - E[A \mid A \le w]\cdot E[A \mid A \le w]\\
&= E\!\bigl[A \cdot E[B \mid A] \;\big|\; A \le w\bigr]
   - \bigl(E[A \mid A \le w]\bigr)^2\\
&= E\!\bigl[A \cdot (w - A) \;\big|\; A \le w\bigr]
   - \bigl(E[A \mid A \le w]\bigr)^2\\
&= w \cdot E[A \mid A \le w]
   \;-\; E[A^2 \mid A \le w]
   \;-\; \bigl(E[A \mid A \le w]\bigr)^2\\
&= \frac{w^2}{2} \;-\; \frac{w^2}{3} \;-\; \frac{w^2}{4}
   \;=\; -\frac{w^2}{12}\\
&= -\,\Var\bigl(A \mid A \le w\bigr)
   \;=\; -\,\Var\bigl(B \mid A \le w\bigr).
\end{split}
\end{equation}
Therefore,
\[
\rho(A, B)\bigl|_{A\leq w}
\;=\;
\frac{\Cov(A, B)\bigl|_{A\leq w}}
     {\sqrt{\,\Var(A \mid A \le w)}\;\sqrt{\,\Var(B \mid A \le w)}} 
\;=\;-1.
\]\\

\underline{Case 2:} $z=0$. In this case, the second performance is not rewarded,
and hence 
\color{black}
the total performance level is bounded above by 1. Therefore, $z=0$ can be optimal only when $w\leq \frac{1}{2}$.

Assume then that $w \leq \frac{1}{2}$.
Since under the optimal scheme, at each period all types with cost less than $w$ perform,
since only agents with cost less than $x$ (resp., $y$)
may perform in period 1 (resp., period 2),
and since $x,y\leq w$,
in the optimal scheme we have $x=y=w$.
Since an agent performs at most once,
the optimal cost correlation adopts the form shown in Definition \ref{onedoubledef1}.

\end{document}